\newtheorem{Theorem}{Theorem}[section]
\newtheorem{Proposition}[Theorem]{Proposition}
\newtheorem{Corollary}[Theorem]{Corollary}
\newtheorem{Ex}{Example}[section]
\newcommand{\Halmos}{\hfill$\Box$}
\newtheorem{Rem}{Remark}[section]
\newtheorem{proofhead}{Proof.}
\newenvironment{proof}
    {
        \par
        \begin{proofhead}
        \normalfont
    }
    {   \qed
        \end{proofhead}
        \par
    }
\newcommand{\bb}{\mathbb}
\def \R {{\bb R}}
\def \E{{\bb E}}
\def \var{{\bb V}{\rm ar}}
\def \cov{{\bb C}{\rm ov}}
\def \pr{{\bb P}}
\def \calT {\mathcal{T}}
\def \calV {\mathcal{V}}
\def \calW {\mathcal{W}}
\def \calK {\mathcal{K}}
\def \calM {\mathcal{M}}
\def \calN {\mathcal{N}}
\def \calH {\mathcal{H}}
\newcommand{\qed}{\hfill$\Box$}
\newcommand{\refs}[1]{(\ref{#1})}
\newcounter{mylistcnt}
\renewcommand{\themylistcnt}{{\rm({\roman{mylistcnt}})}}
\newcounter{zad}
\newcommand{\toi}{\to\infty}
\begin{document}
\title{On the asymptotics of supremum distribution for some iterated processes}

\author{
Marek Arendarczyk
\vspace*{.08in} \\
Mathematical Institute, University of Wroc\l aw \\
pl. Grunwaldzki 2/4, 50-384 Wroc\l aw, Poland }

\date{}
\maketitle

\begin{abstract}
In this paper, we study the asymptotic behavior of supremum distribution of some classes of
iterated stochastic processes $\{X(Y(t)) : t \in [0, \infty)\}$, where 
$\{X(t) : t \in \R \}$ is a centered Gaussian process and $\{Y(t): t \in [0, \infty)\}$ 
is an independent of $\{X(t)\}$ stochastic process with a.s. continuous sample paths. 
In particular, the asymptotic behavior of $\pr\left(\sup_{s \in [0,T]} X(Y(s)) > u\right)$ as $u \toi$, where $T > 0$,
as well as $\lim_{u\toi} \pr\left(\sup_{s \in [0, h(u)]} X(Y(s)) > u\right)$, for some suitably chosen function $h(u)$
are analyzed. 
As an illustration, we study the asymptotic behavior of the supremum distribution of iterated fractional Brownian motion process.

\noindent {\bf Key words:}
exact asymptotics, supremum distribution, iterated process, iterated fractional Brownian motion, Gaussian process.
\\
\noindent {\bf AMS 2000 Subject Classification}: 
Primary 60G15, 
60G18,   	
Secondary 60G70. 
\end{abstract}

\section{Introduction}
Let $\{X(t): t \in \R \}$ and $\{Y(t): t \in [0, \infty) \}$ be two independent stochastic processes.
This contribution is devoted to the analysis of asymptotic behavior of supremum distribution
of iterated process $\{X(Y(t)) : t \in [0,\infty)\}$.

Originated by Burdzy \cite{Bur93, Bur94} for the case of iterated Brownian motion, the problem of analyzing the properties of iterated processes was intensively studied in recent years. 
Motivation for the analysis of the process $\{X(Y(t))\}$ in case of $\{X(t)\}$ and $\{Y(t)\}$ being independent Brownian motions was
delivered by its connections to the $4$th order PDE's (see, e.g., \cite{Funaki79, All01, Nour08}).
A vast literature is devoted to the analysis of many interesting probabilistic properties
of iterated Brownian motions (see, e.g., \cite{Bur95, Hu95, Shi95, Bertoin96, Khosh96, Eis99, Khosh99}).
We also refer to \cite{Curien11} where convergence of finite dimensional distributions
of $n$th iterated Brownian motion is studied and \cite{Turban04} 
where infinite iterations of i.i.d. random walks are analyzed.  

Recent studies also focus on properties of $\{X(Y(t)): t \in [0, \infty)\}$
for the case of more general Gaussian processes $\{X(t)\}$.
One of interesting example of such processes is fractional Laplace motion $\{B_H(\Gamma(t)): t\in [0,\infty) \}$,
where $\{\Gamma(t): t \in [0,\infty)\}$ is a Gamma process.
Motivation for analyzing fractional Laplace motions stems from hydrodynamic
models (see, e.g., \cite{Koz04}). This kind of processes were described in \cite{Koz06}, see
also \cite{ArD11} where asymptotic behavior of exit-time distribution for the process $\{B_H(\Gamma(t))\}$ was found.
Another important class of iterated processes are the so-called $\alpha$-time fractional Brownian motions $\{B_H(Y(t))\}$, where $\{Y(t)\}$ is $\alpha$-stable subordinator independent of the process $\{B_H(t)\}$ (see, e.g., \cite{Linde04, Nane06, Linde08, Lif09}). 
We also refer to \cite{Michna98} and \cite{Deb14} where the process $\{B_H(Y(t))\}$
was analyzed in the context of theoretical actuarial models. 

The process $\{B_H(Y(t))\}$ in the case of $\{Y(t)\}$ not being a subordinator was studied in \cite{Lif09}.
In this case, the small deviations asymptotics was found for the so-called iterated fractional Brownian motion process
$\{B_{H_2}(B_{H_1}(t))\}$, where $\{B_{H_1}(t)\}$, $\{B_{H_2}(t)\}$ are independent fractional Brownian motions 
with Hurst parameters $H_1, H_2 \in (0,1]$ respectively.  

In this paper, we focus on the analysis of asymptotic behavior of supremum distribution of 
the process $\{X(Y(t)): t \in [0, \infty)\}$ for general classes of stochastic processes $\{X(t)\}$, $\{Y(t)\}$ with a.s. continuous 
sample paths. 

{\it Notation and organization of the paper:}

In Section \ref{sec.short}, we study the asymptotic behavior of
\begin{equation}		\label{main.short}
		\pr\left(\sup_{s \in [0,T]} X(Y(s)) >u \right) \ \ {\rm as} \  u \toi,
\end{equation}
where $T > 0$ and $\{X(t): t \in \R \}$, $\{Y(t): t \in [0,\infty)\}$ are independent stochastic processes.
This problem is closely related to the analysis of asymptotic behavior of the supremum 
distribution of the process $\{X(t)\}$ over a random time interval (see, e.g., \cite{BDZ04, ArD11, ArD12, Tan13, Deb14}). 

We start in Section \ref{sec.main.short} by giving general result for the case of $\{X(t)\}$ being 
Gaussian process with stationary increments and convex variance function (see Section \ref{sec.main.short}, assumptions {\bf A1 -- A3}).
In this case, under some general conditions on the process $\{Y(t)\}$ 
(see Section \ref{sec.main.short}, assumptions {\bf L1, L2}), we show that
\refs{main.short} reduces to 
\begin{equation}	\label{sup.random}
		\pr\left(\sup_{s \in [0,\calT]} X(s) > u\right) \ \ {\rm as} \  u \toi,
\end{equation}
where $\calT$ is a non-negative random variable independent of $\{X(t)\}$ with asymptotically Weibullian
tail distribution, that is,
\begin{equation}		\label{eq.def.Weibull}	
		\pr\left(\calT > u \right)
	=	
		C u^\gamma \exp(- \beta u^\alpha)(1 + o(1))
\end{equation}
as $u \toi,$ where $\alpha, \beta, C > 0, \gamma \in \mathbb{R}$ (see, e.g., \cite{ArD11} for details). 
We write 
$
		\calT \in \mathcal{W}(\alpha, \beta, \gamma, C)
$
if $\calT$ satisfies \refs{eq.def.Weibull}.

Section \ref{sec.fbm} is devoted to the special case of the process $\{B_H(Y(t)) : t \in [0, \infty)\}$,
where $\{B_H(t) : t \in \R\}$ is a fractional Brownian motion ({\it fBm}) with Hurst parameter $H \in (0, 1]$,
that is, a centered Gaussian process with stationary increments, a.s. continuous sample paths, $B_H(0) = 0$, and covariance function  
$
		\cov(B_H(t), B_H(s)) = \frac{1}{2}\left(|s|^{2H} + |t|^{2H} - |t - s|^{2H}\right).
$
Due to self-similarity of the process $\{B_H(t)\}$, we are able to provide the exact
asymptotics of \refs{main.short} for the whole range of Hurst parameters $H \in (0, 1]$.
As an illustration, in Proposition \ref{prop.iter.fbm}, we work
out the exact asymptotics of the supremum distribution of iterated fractional Brownian motion
$\{B_{H_2}(B_{H_1}(t)) : t \in [0,\infty)\}$, where $\{B_{H_1}(t)\}, \{B_{H_2}(t)\}$ are
independent fractional Brownian motions with Hurst parameters $H_1, H_2$ respectively.
Note that small deviation counterpart of this problem was recently studied in \cite{Lif09}.

In Section \ref{sec.short.st}, the case of $\{X(t)\}$ being a stationary Gaussian process is analyzed (see Section \ref{sec.short.st}, assumptions {\bf D1, D2}). 
In this case the exact asymptotics of \refs{main.short} can be achieved under a 
general condition of finite average {\it span} of the process $\{Y(t)\}$ (see Section \ref{sec.short.st}, assumption {\bf S1}).
This problem is strongly related to the analysis of \refs{sup.random} in case of $\calT$ being a random variable 
with finite mean.
In this case the asymptotics of \refs{sup.random} has the form 
(see \cite{ArD12}, Theorem 3.1, and also \cite{Pic69} for the classical result of Pickands' on deterministic time interval)
\[
		\pr\left(\sup_{s \in [0,\calT]} X(s) > u\right)
	=	
		\E\calT C^{1/\alpha}\calH_\alpha u^{2/\alpha} \Psi(u)(1 + o(1))
\] 
as $u \toi$, where $\mathcal{H}_\alpha$ is the Pickands' constant defined by the limit
\[
		\mathcal{H}_\alpha
	=
		\lim_{T \toi}\frac{1}{T} \E\exp\left(\sup_{t \in [0,T]} \sqrt{2}B_\frac{\alpha}{2}(t) - t^{\alpha}\right),
\]
and 
$
	\Psi(u) := \pr(\calN > u)
$ 
with $\calN$ denoting the standard normal random variable.

In the second part of the paper, we study 
\begin{equation}	\label{main.long}
		\lim_{u \toi} \pr\left(\sup_{s \in [0,h(u)]} X(Y(s)) >u \right),
\end{equation}
for some suitably chosen function $h(u)$.\\ 
First, in Theorem \ref{long.st.incr} we investigate limiting behavior of \refs{main.long} for the case of $\{X(t)\}$ and $\{Y(t)\}$ being independent Gaussian processes with stationary increments that satisfy some general regularity conditions 
(see Section \ref{sec.long.st}, assumptions {\bf B1 -- B3}).
Then, in Theorem \ref{long.stat.main} and Proposition \ref{long.st.ss}, the case of $\{X(t)\}$ being stationary Gaussian process
is studied.
We analyze $\{X(Y(t))\}$ for both weakly and strongly dependent stationary Gaussian processes $\{X(t)\}$
(see Section \ref{sec.long.st}, assumptions {\bf D1 -- D3}). In these settings we provide \refs{main.long} 
in the case of $\{Y(t)\}$ 
being a centered Gaussian process with stationary increments,
as well as for self-similar process $\{Y(t)\}$ that is not necessarily Gaussian.
\section{Short timescale case}	\label{sec.short}
In this section, we study the asymptotic behavior of 
\begin{equation}		\label{main.short2}
		\pr\left(\sup_{s \in [0,T]} X(Y(s)) >u \right) \ \ {\rm as} \  u \toi,
\end{equation}
where $T > 0$, for the case of $\{X(t): t \in \R \}$ being a centered Gaussian process 
with a.s. continuous sample paths. We focus on two important classes of Gaussian processes.
First, processes $\{X(t)\}$ with stationary increments are studied. Then, we analyze the case of stationary processes $\{X(t)\}$. 
%
\subsection{The stationary increments case}	\label{sec.main.short}

Let $\{X(t): t \in \R \}$ be a centered Gaussian process with stationary increments,
a.s. continuous sample paths, $X(0) = 0$ a.s., and variance function 
$\sigma_X^2(t) := \var(X(t))$ that satisfies the following assumptions 
\\ \\
{\bf A1}\ \  $\sigma^2_X(t)\in C^1([0, \infty))$ is convex;\\
{\bf A2}\ \  $\sigma^2_X(t)$ is regularly varying at $\infty $ with parameter
$\alpha_\infty\in (1 , 2)$; \\
{\bf A3}\ \  there exists $D>0$ such that $\sigma^2_X(t)\le Dt^{\alpha_\infty}$
for each $t\ge 0$.\\ \\
To provide general result for \refs{main.short2} we assume that $\{Y(t) : t \in [0, \infty) \}$ is a stochastic process
with a.s. continuous sample paths, which is independent of $\{X(t)\}$ and its extremal distributions  
belong to the Weibullian class of random variables, that is,
\\ \\
%
%
{\bf L1}\ \  $\calM := \sup_{s \in [0,T]} Y(s) \in \calW(\alpha_1, \beta_1, \gamma_1, C_1)$, \ {\rm with} \ 
		$\alpha_1, \beta_1, C_1 > 0, \gamma_1 \in \R$;\\
{\bf L2}\ \  $\calK:= -\inf_{s \in [0,T]} Y(s) \in \calW(\alpha_2, \beta_2, \gamma_2, C_2)$, \ {\rm with} \ 
		$\alpha_2, \beta_2, C_2 > 0, \gamma_2 \in \R$. 
\begin{Rem}
Note that assumptions {\bf L1, L2} cover, e.g., a class of general Gaussian processes.
\end{Rem}
In the following theorem we present structural form of the asymptotics. The explicit asymptotic
expansion is presented in Corollary \ref{cor.exact}.
\begin{Theorem}		\label{th.short}
Let $\{ X(t): t \in \R \}$ be a centered Gaussian process with stationary increments 
and variance function $\sigma^2_X(t)$ that satisfies assumptions {\bf A1 -- A3}
and $\{Y(t): t \in [0,\infty)\}$ be an independent of 
$\{X(t)\}$ stochastic process with a.s. continuous sample paths that satisfies {\bf L1, L2}. If 
\begin{itemize}
\item[{\rm (i)}] $\pr(\calK > u) = o(\pr(\calM > u))$ as $u \toi$, then
\[
		\pr\left(\sup_{s \in [0,T]} X(Y(s)) >u\right)
	=
			\pr\left(X(\calM) > u\right)(1 + o(1)) \ \ \ {\rm as} \  u \toi;
\]
\item[{\rm (ii)}] $\pr(\calM > u) = o(\pr(\calK > u))$ as $u \toi$, then
\[
		\pr\left(\sup_{s \in [0,T]} X(Y(s)) >u\right)
	=	
		\pr\left(X(\calK) > u\right)(1 + o(1)) \ \ \ {\rm as} \ u \toi; 
\]
\item[{\rm (iii)}] $\pr(\calK > u) = \frac{C_2}{C_1}\pr(\calM > u)(1 + o(1))$, as $u \toi$, then
\[
			\pr\left(\sup_{s \in [0,T]} X(Y(s)) >u\right)
	=
			\left(\pr(X(\calM) > u) + \pr(X(\calK) > u)\right)(1 + o(1))\ \ \  {\rm as}\ u \toi.
\]
\end{itemize}
\end{Theorem}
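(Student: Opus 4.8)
The plan is to reduce the supremum of the iterated process over $[0,T]$ to a supremum of $\{X(t)\}$ over the random interval $[\calK_-,\calM]$, where $\calK_- := \inf_{s\in[0,T]} Y(s) = -\calK$, and then to split this into the two one-sided contributions $\sup_{t\in[0,\calM]}X(t)$ and $\sup_{t\in[\calK_-,0]}X(t)$. Since $\{Y(t)\}$ has a.s. continuous sample paths on $[0,T]$, its range is a.s. the closed interval $[\calK_-,\calM]$; hence by continuity of $\{X(t)\}$ and the independence of the two processes,
\[
	\sup_{s\in[0,T]} X(Y(s)) \;=\; \sup_{t\in[-\calK,\,\calM]} X(t) \quad\text{a.s.}
\]
So the first step is to condition on $\{Y(t)\}$, replace the left-hand side by $\sup_{t\in[-\calK,\calM]}X(t)$, and then handle $\sup_{t\in[-\calK,\calM]}X(t)$ by observing that, up to a factor bounded between $1$ and $2$,
\[
	\pr\!\left(\sup_{t\in[0,\calM]}X(t)>u\right) \vee \pr\!\left(\sup_{t\in[-\calK,0]}X(t)>u\right)
	\;\le\;
	\pr\!\left(\sup_{t\in[-\calK,\calM]}X(t)>u\right)
	\;\le\;
	\pr\!\left(\sup_{t\in[0,\calM]}X(t)>u\right) + \pr\!\left(\sup_{t\in[-\calK,0]}X(t)>u\right).
\]
By stationarity of increments and $X(0)=0$, the process $\{X(-t):t\ge 0\}$ has the same variance function as $\{X(t):t\ge 0\}$, so $\sup_{t\in[-\calK,0]}X(t)$ is distributed as $\sup_{t\in[0,\calK]}\widetilde X(t)$ for a copy $\widetilde X$ of $X$ independent of $\calK$. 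Thus everything comes down to understanding $\pr(\sup_{t\in[0,\calS]}X(t)>u)$ for $\calS\in\{\calM,\calK\}$, each of which is Weibullian by {\bf L1}, {\bf L2}.

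The second step is to establish the single-sided asymptotic equivalence
\[
	\pr\!\left(\sup_{t\in[0,\calS]}X(t)>u\right) \;=\; \pr\!\left(X(\calS)>u\right)(1+o(1)),\qquad u\toi,
\]
for any non-negative $\calS\in\calW(\cdot,\cdot,\cdot,\cdot)$ independent of $\{X(t)\}$. The lower bound is immediate since $X(\calS)\le \sup_{t\in[0,\calS]}X(t)$. For the upper bound one uses assumptions {\bf A1--A3}: convexity and regular variation with index $\alpha_\infty\in(1,2)$ of $\sigma_X^2$, together with the Weibullian (hence exponentially light) tail of $\calS$, force the dominant contribution in $\sup_{t\in[0,\calS]}X(t)>u$ to come from $\calS$ taking its largest attainable values, i.e.\ from a neighborhood of the ``most likely'' level $s^*(u)$ growing like a power of $u$; on that scale the supremum of the Gaussian bridge-type process is, up to $1+o(1)$, governed by its endpoint value $X(\calS)$, a Borell--TIS / Pickands-type localization argument combined with a standard slicing of the range of $\calS$. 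This is exactly the type of result recorded in \cite{ArD11, ArD12}, so I would either cite it directly or reproduce its short proof; it is the technical heart, and the main obstacle is making the localization uniform across the relevant range of $\calS$ while controlling the Gaussian supremum over growing intervals via {\bf A3}.

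The final step is bookkeeping with the three regimes. Write $a(u):=\pr(X(\calM)>u)$ and $b(u):=\pr(X(\calK)>u)$. By Step 2, $\pr(\sup_{t\in[0,\calM]}X(t)>u)=a(u)(1+o(1))$ and $\pr(\sup_{t\in[-\calK,0]}X(t)>u)=b(u)(1+o(1))$. From {\bf L1}, {\bf L2} and the single-sided reduction, the tail of $X(\calM)$ is driven by $\pr(\calM>\cdot)$ and that of $X(\calK)$ by $\pr(\calK>\cdot)$ through the same functional, so that the hypothesis $\pr(\calK>u)=o(\pr(\calM>u))$ translates into $b(u)=o(a(u))$, the hypothesis $\pr(\calM>u)=o(\pr(\calK>u))$ into $a(u)=o(b(u))$, and $\pr(\calK>u)=\tfrac{C_2}{C_1}\pr(\calM>u)(1+o(1))$ into $a(u)\asymp b(u)$ with a genuine (non-degenerate) limiting ratio — this last point is where the $C_2/C_1$ normalization is used, to make sure neither term is negligible. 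Plugging these into the sandwich from Step 1: in case (i) the upper bound $a(u)+b(u)=a(u)(1+o(1))$ meets the lower bound $a(u)(1+o(1))$, giving $\pr(X(\calM)>u)(1+o(1))$; case (ii) is symmetric; in case (iii) both $a(u)$ and $b(u)$ are of the same order, the lower bound $\max$ is $(a(u)+b(u))(1+o(1))$ after using the exact asymptotic proportionality, and the upper bound is $a(u)+b(u)$, so the two coincide up to $1+o(1)$ and the result is $(\pr(X(\calM)>u)+\pr(X(\calK)>u))(1+o(1))$.
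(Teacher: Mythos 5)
Your Step 1 reduction to $\sup_{t\in[-\calK,\calM]}X(t)$, your Step 2 appeal to the Weibullian-time result of Arendarczyk--D\c{e}bicki (Theorem 3.1 in \cite{ArD11}), and your treatment of cases (i) and (ii) via the sandwich between the larger one-sided term and the sum of the two one-sided terms are essentially the paper's argument (the paper phrases the same thing through inclusion--exclusion, $P_1(u)+P_2(u)-P_3(u)$, and drops $-P_3(u)\le 0$ for the upper bound). Up to that point the proposal is sound, modulo the routine check that the $o$-relation between the tails of $\calK$ and $\calM$ transfers to the tails of $X(\calK)$ and $X(\calM)$ via the explicit Weibullian asymptotics.

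The genuine gap is case (iii). Your bookkeeping claims that ``the lower bound $\max$ is $(a(u)+b(u))(1+o(1))$ after using the exact asymptotic proportionality,'' but this is false precisely in the regime where (iii) applies: if $a(u)\asymp b(u)$ with a nondegenerate ratio (say $a\sim b$), then $\max(a(u),b(u))\sim \tfrac12(a(u)+b(u))$, so your sandwich only determines the answer up to a multiplicative constant between $1$ and $2$ and cannot give the stated exact asymptotics $\left(\pr(X(\calM)>u)+\pr(X(\calK)>u)\right)(1+o(1))$. To close this you need a genuine lower bound from inclusion--exclusion, i.e.\ you must show that the joint exceedance probability $P_3(u)=\pr\left(\sup_{s\in[-\calK,0]}X(s)>u,\ \sup_{s\in[0,\calM]}X(s)>u\right)$ is $o\left(\pr(X(\calM)>u)+\pr(X(\calK)>u)\right)$. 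This is the technical heart of the paper's proof and is entirely absent from your plan: the paper conditions on $\calM$, splits the range of $\calM$ at $a(u)=u^{2/(\alpha_\infty+2\alpha_1)}$ and $A(u)=u^{4/(2\alpha_\infty+\alpha_1)}$, disposes of the extreme ranges by the short-interval supremum bound and the Weibullian tail of $\calM$, and in the middle range applies the Borell inequality to $X(s)+X(t)$ on $[-w,0]\times[0,w]$, using convexity ({\bf A1}) to get $\var(X(s)+X(t))\le 3\sigma_X^2(w)$ and a metric-entropy bound ({\bf A3}) to control $\E\sup_{[0,w]}X$, so that the joint probability is $o\bigl(\Psi(u/\sigma_X(w))\bigr)$ uniformly; this is where the exponent $3$ (strictly less than $4$) in the variance bound, and hence the convexity assumption, is actually used. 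Without some argument of this type, case (iii) does not follow from your sandwich.
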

The proof of Theorem \ref{th.short} is presented in Section \ref{th.short.proof}. \\

If the variance function of $\{X(t)\}$ is regular enough, then the straightforward application of Corollary 3.2 in \cite{ArD11} enables us to give the exact form of the asymptotics.

\begin{Corollary}\label{cor.exact}
Let $\{X(t): t \in \R\}$ be a centered Gaussian process with stationary increments
and variance function that satisfies {\bf A1} and $\{Y(t): t \in [0,\infty)\}$ be an independent of $\{X(t)\}$ 
stochastic process with a.s. continuous sample paths that satisfies {\bf L1, L2}.
Additionally, if $\sigma_X^2(t)=Dt^{\alpha_\infty}+o(t^{\alpha_\infty-\alpha})$, as $t\to\infty$,
with $\alpha_\infty\in(1,2)$, $D>0$, and $\alpha = \min(\alpha_1, \alpha_2)$, then
\begin{eqnarray}
    \nonumber
\sup_{s \in [0,T]} X(Y(s))\in \mathcal{W}(\widetilde{\alpha},\widetilde{ \beta},\widetilde{\gamma},\widetilde{ C}),
\end{eqnarray} 
where
\begin{eqnarray*}
\widetilde{\alpha}&=&\frac{2\alpha}{\alpha+\alpha_\infty},\ \ \ \ \ \ 
\widetilde{ \beta}\ = \ 
    \beta^{\frac{\alpha_\infty}{\alpha+\alpha_\infty}}\left(\frac{D}{2}\right)^{\frac{\alpha}{\alpha+\alpha_\infty}}
     \left( \left(\frac{\alpha}{\alpha_\infty} \right)^{\frac{\alpha_\infty}{\alpha+\alpha_\infty}}  +
             \left(\frac{\alpha_\infty}{\alpha} \right)^{\frac{\alpha}{\alpha+\alpha_\infty}} \right),		\\
\widetilde{\gamma}&=&\frac{2\gamma}{\alpha+\alpha_\infty},\ \ \ \ \ \ 
\widetilde{ C}\ = \ 
     CD^{-1/\alpha_\infty}\sqrt{\frac{\alpha_\infty}{2(\alpha+\alpha_\infty)}}\left( \frac{\alpha_\infty}{2\alpha\beta} D^{\alpha_\infty/\alpha} \right)^{\frac{\gamma}{\alpha+\alpha_\infty}},
\end{eqnarray*}
with
\begin{eqnarray*}
		&&(\beta, \gamma, C)
	=
		\left\{ \begin{array}{lll}
		(\beta_1, \gamma_1, C_1) & for & \pr(\calK > u) = o(\pr(\calM > u)) \ \ as\ u \toi,\\
		(\beta_2, \gamma_2, C_2) & for & \pr(\calM > u) = o(\pr(\calK > u)) \ \ as\ u \toi,\\
		(\beta_1, \gamma_1, C_1 + C_2) & for & \pr(\calK > u) = \frac{C_2}{C_1}\pr(\calM > u)(1 + o(1)) \ \ as\ u \toi.
		\end{array} \right.
\end{eqnarray*}

\end{Corollary}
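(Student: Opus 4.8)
The strategy is to feed the structural identity of Theorem~\ref{th.short} into the exact asymptotics for the tail of a Gaussian variable sampled at an independent Weibullian time, i.e. into Corollary~3.2 of \cite{ArD11}.

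First I would note that the hypotheses of Theorem~\ref{th.short} are in force: the refined expansion $\sigma_X^2(t)=Dt^{\alpha_\infty}+o(t^{\alpha_\infty-\alpha})$ with $\alpha_\infty\in(1,2)$ makes $\sigma_X^2$ regularly varying at infinity with index $\alpha_\infty$ (so {\bf A2} holds) and gives $\sigma_X^2(t)\le(D+1)t^{\alpha_\infty}$ for all large $t$ (which is the only range in which {\bf A3} is used in an asymptotic statement), while {\bf L1, L2} are assumed outright. Hence Theorem~\ref{th.short} reduces $\pr\big(\sup_{s\in[0,T]}X(Y(s))>u\big)$ to $\pr(X(\calM)>u)$ in case~(i), to $\pr(X(\calK)>u)$ in case~(ii), and to $\pr(X(\calM)>u)+\pr(X(\calK)>u)$ in case~(iii), each modulo a factor $1+o(1)$.

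The core of the argument is the evaluation of $\pr(X(\calT)>u)$ for $\calT\in\{\calM,\calK\}$, a non-negative random variable independent of $\{X(t)\}$. Since $X(t)$ is centered Gaussian with variance $\sigma_X^2(t)$, conditioning on $\calT$ gives $\pr(X(\calT)>u)=\E\,\Psi\big(u/\sigma_X(\calT)\big)$ with $\sigma_X(t):=\sqrt{\sigma_X^2(t)}$, a Laplace-type integral whose exponential rate is controlled by the competition between the Gaussian factor $\exp\big(-u^2/(2\sigma_X^2(t))\big)\approx\exp\big(-u^2/(2Dt^{\alpha_\infty})\big)$ and the tail $\pr(\calT>t)\approx Ct^{\gamma}\exp(-\beta t^{\alpha})$. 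The exponent $u^2/(2Dt^{\alpha_\infty})+\beta t^{\alpha}$ is minimized at a stationary point $t^\ast\asymp u^{2/(\alpha+\alpha_\infty)}$, which is precisely why the resulting Weibull exponent and polynomial order are $\widetilde\alpha=2\alpha/(\alpha+\alpha_\infty)$ and $\widetilde\gamma=2\gamma/(\alpha+\alpha_\infty)$. This evaluation is exactly the content of Corollary~3.2 of \cite{ArD11}, which states that $X(\calT)\in\calW(\widetilde\alpha,\widetilde\beta,\widetilde\gamma,\widetilde C)$ with $\widetilde\alpha,\widetilde\beta,\widetilde\gamma,\widetilde C$ given by the displayed formulas in terms of the Weibull parameters $(\alpha,\beta,\gamma,C)$ of $\calT$ and of $D,\alpha_\infty$. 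Applying it with $\calT=\calM$, parameters $(\alpha_1,\beta_1,\gamma_1,C_1)$, resp. with $\calT=\calK$, parameters $(\alpha_2,\beta_2,\gamma_2,C_2)$, determines the Weibull class of each of the two terms.

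It remains to read off the correct parameters case by case. In case~(i) the hypothesis $\pr(\calK>u)=o(\pr(\calM>u))$ forces $\alpha_1\le\alpha_2$, hence $\alpha_1=\min(\alpha_1,\alpha_2)=\alpha$, and the answer is the $\calW$-class of $X(\calM)$, i.e. the stated formulas with $(\beta,\gamma,C)=(\beta_1,\gamma_1,C_1)$; case~(ii) is symmetric with $(\beta_2,\gamma_2,C_2)$. In case~(iii), $\pr(\calK>u)=\frac{C_2}{C_1}\pr(\calM>u)(1+o(1))$ forces $\alpha_1=\alpha_2=\alpha$, $\beta_1=\beta_2$ and $\gamma_1=\gamma_2$, so $X(\calM)$ and $X(\calK)$ share the triple $(\widetilde\alpha,\widetilde\beta,\widetilde\gamma)$; since $\widetilde C$ is linear in $C$ (all remaining factors depending only on $\beta,\gamma,D,\alpha,\alpha_\infty$), the sum of the two tails is again Weibullian with constant proportional to $C_1+C_2$, which is the third line of the table. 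I expect no genuine obstacle once Theorem~\ref{th.short} and Corollary~3.2 of \cite{ArD11} are available; the only points needing care are verifying that the tail ordering encoded by {\bf L1, L2} matches $\alpha=\min(\alpha_1,\alpha_2)$ in each case, and the additive bookkeeping in case~(iii), which uses both the shared exponential rate and the linearity of $\widetilde C$ in $C$.
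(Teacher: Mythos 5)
Your proposal is correct and takes essentially the same route as the paper, which treats the corollary as an immediate consequence of Theorem~\ref{th.short} combined with Corollary~3.2 of \cite{ArD11}, including the same case-by-case identification of $(\beta,\gamma,C)$ and the additivity of $\widetilde{C}$ in case (iii). The only small over-claim is that {\bf A3} is ``only used for large $t$'': in the proof of Theorem~\ref{th.short} it enters the entropy bound for $\E\sup_{s\in[0,w]}X(s)$ over all scales $\epsilon\in(0,\sigma_X(w)]$, but convexity ({\bf A1}) together with the expansion of $\sigma_X^2$ still gives a bound of the required order, so nothing essential changes.
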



\subsection{The case of fBm}	\label{sec.fbm}
Let $\{B_H(t): t \in \R \}$ be a fractional Brownian motion with Hurst parameter $H \in (0,1]$.
In this section, we analyze the asymptotic behavior of 
\begin{eqnarray}	\label{sup.short.fbm}
		\pr\left(\sup_{s \in [0,T]} B_H(Y(s)) >u\right)\ \ {\rm as} \  u \toi,
\end{eqnarray}
where $T > 0$ and $\{Y(t) : t \in [0, \infty)\}$ is an independent of $\{B_H(t)\}$ stochastic process
with a.s. continuous sample paths that satisfies assumptions
{\bf L1, L2}. Due to self-similarity of the process 
$\{B_H(t)\}$, we are able to provide the exact asymptotics of \refs{sup.short.fbm} for the whole range of
Hurst parameters $H \in (0, 1]$, which includes cases of both convex and concave variance functions. 
\begin{Proposition}		\label{th.fbm}
Let $\{ B_H(t): t \in \R \}$ be a fractional Brownian motion with Hurst parameter
$H \in (0,1]$ and $\{Y(t): t \in [0,\infty)\}$ be an independent of 
$\{B_H(t)\}$ stochastic process  with a.s. continuous sample paths that satisfies {\bf L1, L2}. If: 

\begin{itemize}
\item[] $H \in (0,1/2)$, then
\[
		\sup_{s \in [0,T]} B_H(Y(s)) \in \mathcal{W}\left(\frac{2\alpha}{\alpha + 2H}, \tilde{{\beta}}, 
		\frac{2\alpha - 3\alpha H + 2\gamma}{ \alpha  + 2H}, \tilde{C}_1 \right),
\]
%
\item[] $H = 1/2$, then
\[
		\sup_{s \in[0, T]} B_H(Y(s)) \in
		\mathcal{W}\left(\frac{2\alpha}{\alpha + 2H}, \tilde{\beta} , 
		\frac{2\gamma}{\alpha + 2H}, 2 \tilde{C}_2 \right),
\]
%
\item[] $H \in (1/2, 1]$, then
\[
		\sup_{s \in[0, T]} B_H(Y(s)) \in
    \mathcal{W}\left(\frac{2\alpha}{\alpha + 2H}, \tilde{\beta} , 
		\frac{2\gamma}{\alpha + 2 H}, \tilde{C}_2 \right),
\]
\end{itemize}
where
\begin{eqnarray*}
			\alpha &=& \min(\alpha_1, \alpha_2), \ \ \ \ \ \ 
      \tilde{\beta} = \beta^\frac{2H}{\alpha + 2H}
        \left(\frac{1}{2}\left(\frac{\alpha}{H}\right)^\frac{2H}{\alpha + 2H} +
        \left(\frac{H}{\alpha}\right)^\frac{\alpha}{\alpha + 2H} \right),		\\
      \tilde{C}_1 &=& \mathcal{H}_H\left(\frac{1}{2}\right)^\frac{1}{2H}
       \frac{C}{\sqrt{\alpha + 2H}}
       H^\frac{ \alpha + 6H + 2\gamma - 2}{2\alpha + 4H}
       (\alpha\beta)^\frac{1 - 2H - \gamma}{\alpha + 2H},\ \ \ \ \ \ 
       \tilde{C}_2 = \frac{C\sqrt{H}}{\sqrt{\alpha + 2H}}
       \left( \frac{H}{\alpha\beta}\right)^\frac{\gamma}{\alpha + 2H},
\end{eqnarray*}
\\
with $\beta, \gamma, C$ as in Corollary \ref{cor.exact}.
\end{Proposition}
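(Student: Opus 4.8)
My plan is to exploit the self-similarity of $B_H$ to reduce \refs{sup.short.fbm} to the tail behaviour of the supremum of $B_H$ over the \emph{random} intervals governed by $\calM$ and by $\calK$, and then to run a Laplace/saddle-point analysis over that Weibullian randomness, exactly along the lines of Corollary 3.2 in \cite{ArD11} that underlies Corollary \ref{cor.exact}. First I would condition on the path of $Y$: since $Y$ has a.s.\ continuous sample paths, its range over $[0,T]$ is the interval $[-\calK,\calM]$, so that almost surely
\[
\sup_{s\in[0,T]}B_H(Y(s))=\sup_{t\in[-\calK,\calM]}B_H(t)=\max\Bigl(\ \sup_{t\in[0,\calM]}B_H(t)\ ,\ \sup_{t\in[0,\calK]}B_H(-t)\ \Bigr).
\]
Using $\{B_H(-t):t\ge0\}\stackrel{d}{=}\{B_H(t):t\ge0\}$, the independence of $B_H$ and $Y$, and the self-similarity $\{B_H(at):t\ge0\}\stackrel{d}{=}\{a^HB_H(t):t\ge0\}$, each of the two suprema has, conditionally on $\calM$ (resp.\ $\calK$), the law of $\calM^H\xi_H$ (resp.\ $\calK^H\xi_H$), where $\xi_H:=\sup_{t\in[0,1]}B_H(t)$. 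Hence inclusion--exclusion gives
\[
\pr\Bigl(\sup_{s\in[0,T]}B_H(Y(s))>u\Bigr)=\E\bigl[\pr(\xi_H>u\calM^{-H}\mid\calM)\bigr]+\E\bigl[\pr(\xi_H>u\calK^{-H}\mid\calK)\bigr]-R(u),
\]
where the overlap $R(u)=\pr\bigl(\sup_{t\in[0,\calM]}B_H(t)>u,\ \sup_{t\in[0,\calK]}B_H(-t)>u\bigr)$ will be shown negligible relative to each of the first two terms, exactly as the corresponding overlap is handled in the proof of Theorem \ref{th.short} (via a two-point/Slepian-type bound that uses $B_H(0)=0$ and that $\cov(B_H(\tau),B_H(-\sigma))$ stays bounded away from perfect correlation).

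The second ingredient I need is the exact tail of $\xi_H=\sup_{t\in[0,1]}B_H(t)$. On $[0,1]$ the variance $t^{2H}$ is strictly increasing with unique maximum at the boundary point $t=1$, where $1-\sigma_{B_H}(t)=H(1-t)(1+o(1))$, while the increments of $B_H$ are locally self-similar of index $H$. The classical boundary-extremum theory for Gaussian processes then yields, as $v\toi$,
\[
\pr(\xi_H>v)=c_H\,v^{\rho_H}\,\Psi(v)(1+o(1)),
\]
where $\rho_H=\tfrac{1}{H}-2>0$ and $c_H$ is a positive constant built from the Pickands constant $\calH_H$, a power of $2$, and the endpoint slope $H$, in the regime $H\in(0,1/2)$ (here $2H<1$, so a genuine Pickands-type window contributes); $\rho_{1/2}=0$ and $c_{1/2}=2$ for $H=1/2$ (this is just the reflection principle $\pr(\sup_{[0,1]}B_{1/2}>v)=2\Psi(v)$); and $\rho_H=0$, $c_H=1$ for $H\in(1/2,1]$ (the maximal-variance point alone governs the supremum, with $\xi_1=B_1(1)^+$ as the elementary endpoint case).

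It remains to feed this into a Laplace computation. Writing the $\calM$-term as $\E\bigl[c_H(u\calM^{-H})^{\rho_H}\Psi(u\calM^{-H})(1+o(1))\bigr]$, substituting $\Psi(w)\sim(\sqrt{2\pi}\,w)^{-1}e^{-w^2/2}$ and the Weibullian density $f_\calM(x)\sim C_1\alpha_1\beta_1x^{\gamma_1+\alpha_1-1}e^{-\beta_1x^{\alpha_1}}$ associated with $\calM\in\calW(\alpha_1,\beta_1,\gamma_1,C_1)$, one obtains an integral whose exponent $\tfrac{u^2}{2x^{2H}}+\beta_1x^{\alpha_1}$ is minimised at $x_\ast=\bigl(Hu^2/(\alpha_1\beta_1)\bigr)^{1/(\alpha_1+2H)}$; Laplace's method (with the routine uniform-integrability justification near $x_\ast$) gives a Weibullian tail of index $\widetilde{\alpha}=\tfrac{2\alpha_1}{\alpha_1+2H}$ with the claimed $\widetilde{\beta},\widetilde{\gamma},\widetilde{C}$ expressed through $(\beta_1,\gamma_1,C_1)$ --- and for $H\in(1/2,1)$ this step is literally Corollary \ref{cor.exact} with $D=1$, $\alpha_\infty=2H$. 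The same applies to the $\calK$-term with $(\beta_2,\gamma_2,C_2)$. Comparing the two resulting Weibullian tails --- the smaller index $\alpha_i$ wins, and when $\alpha_1=\alpha_2$ one compares $(\beta_i,\gamma_i)$ and, in the tie case, adds the constants --- produces $\alpha=\min(\alpha_1,\alpha_2)$ and the selection of the triple $(\beta,\gamma,C)$ inherited from Corollary \ref{cor.exact}, with all exponents and constants collapsing to the three displays of the statement (the extra $2$ in $2\widetilde{C}_2$ for $H=1/2$ coming directly from $c_{1/2}=2$, and the shifted power $\tfrac{2\alpha-3\alpha H+2\gamma}{\alpha+2H}$ for $H\in(0,1/2)$ from $\rho_H=\tfrac{1}{H}-2$).

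I expect the genuinely delicate points to be two. First, pinning down the exact boundary asymptotics $\pr(\xi_H>v)=c_H v^{1/H-2}\Psi(v)(1+o(1))$ for $H\in(0,1/2)$ with the correct constant: one must identify precisely which Pickands constant and which endpoint-slope and powers-of-$2$ factors appear, so that, after the Laplace integration, they reassemble into $\widetilde{C}_1$ and the correct $\widetilde{\gamma}$. Second, making the overlap term $R(u)$ rigorously negligible in the borderline regime $\pr(\calK>u)\sim\tfrac{C_2}{C_1}\pr(\calM>u)$: this needs a quantitative estimate for the probability that $B_H$ exceeds a high level both to the right and to the left of the origin, sharp and uniform enough to survive integration against the Weibullian laws of $\calM$ and $\calK$.
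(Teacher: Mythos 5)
Your plan follows essentially the same route as the paper: the same inclusion--exclusion over $[-\calK,0]$ and $[0,\calM]$ with the symmetry $\{B_H(-t)\}\stackrel{d}{=}\{B_H(t)\}$, self-similarity to reduce everything to $\sup_{s\in[0,1]}B_H(s)$, and negligibility of the joint term handled exactly as in the proof of Theorem \ref{th.short} (there it is done by splitting the $\calM$-integral at $a(u)$, $A(u)$ and applying the Borell inequality to $B_H(s)+B_H(t)$, which is the quantitative bound you anticipate). The only substantive difference is that where you sketch a Laplace/saddle-point re-derivation of the one-sided tail $\pr\left(\sup_{s\in[0,\calM]}B_H(s)>u\right)$ from the boundary asymptotics of $\sup_{s\in[0,1]}B_H(s)$, the paper simply cites Theorem 4.1 and Lemma 4.2 of \cite{ArD11}, which package precisely that computation together with the exact constants you flag as the delicate point.
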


The proof of Proposition \ref{th.fbm} is presented in Section \ref{th.fbm.proof}. \\

We now apply Proposition \ref{th.fbm} to calculate the exact asymptotics 
for the special case of iterated fractional Brownian motion process $\{B_{H_2}(B_{H_1}(t))\}$.

\begin{Proposition}	\label{prop.iter.fbm}
Let $\{B_{H_1}(t): t \in \R \}$ and $\{B_{H_2}(t): t \in [0,\infty)\}$ be independent
fractional Brownian motions with Hurst parameters $H_1, H_2 \in [0,1)$ respectively.
Then 
\[
		\sup_{s \in [0,T]}B_{H_2}(B_{H_1}(s)) \in \calW\left(\alpha, \beta, \gamma, C \right),
\]
where
\begin{eqnarray*}
	\alpha = \frac{2}{H_2 + 1}, \ \ \ \ 
	\beta = \left(\frac{1}{T}\right)^\frac{2H_1H_2}{1 + H_2}
					\left(\frac{1}{2}\right)^\frac{H_2}{1 + H_2}
					\left(\frac{1}{2} \left(\frac{2}{H_2}\right)^\frac{H_2}{1 + H_2} +
					\left(\frac{H_2}{2}\right)^\frac{1}{1 + H_2} \right),
\end{eqnarray*}
and 
\begin{eqnarray*}
		&&(\gamma, C)
	=
		\left\{ \begin{array}{lll}
			(\gamma_1, C_1) & for & H_1 \in (0, 1/2), H_2 \in (0, 1/2),  \vspace{0.5ex} \\
			(\gamma_2, 2C_2) & for & H_1 \in (0,1/2), H_2 = 1/2,	\vspace{0.5ex} \\ 
			(\gamma_2, C_2) & for & H_1 \in (0, 1/2), H_2 \in (1/2,1], \vspace{0.5ex} \\ 
			(\gamma_3, 2C_3) & for & H_1 = 1/2, H_2 \in (0, 1/2), \vspace{0.5ex} \\
			(\gamma_4, 4C_4) & for & H_1 = 1/2, H_2 = 1/2, \vspace{0.5ex} \\	
			(\gamma_4, 2C_4) & for & H_1 = 1/2, H_2 \in (1/2,1], \vspace{0.5ex} \\
			(\gamma_3, C_3) & for &	H_1 \in  (1/2,1], H_2 \in (0, 1/2), \vspace{0.5ex} \\
			(\gamma_4, 2C_4) & for & H_1 \in (1/2,1], H_2 = 1/2,	 \vspace{0.5ex} \\
			(\gamma_4, C_4) & for & H_1 \in  (1/2,1], H_2 \in (1/2,1],				
		\end{array} \right.
\end{eqnarray*}

%
%
%
%
%
%
%
%
%
with
\begin{eqnarray*}
	\gamma_1 &=& \frac{1 - H_1 -3H_1H_2}{H_1(1 + H_2)}, \ \ \ 
	\gamma_2 \ =\ \frac{1 - 3H_1}{H_1(1 + H_2)},		\	\ \ 
	\gamma_3 \ =\  \frac{1 - 3H_2}{1 + H_2},  \ \ \ 
	\gamma_4 \ =\  -\frac{1}{1 + H_2}, \\
		C_1 
	&=&  
		\left(\frac{1}{T}\right)^{H_2 - 3H_1H_2}
		\frac{\calH_{H_1}\calH_{H_2}}{H_1\sqrt{\pi(1 + H_2)}}
		\left(\frac{1}{2}\right)^{\frac{H_1 + H_2 + 2H_1H_2}{2H_1H_2}} 
		H_2^\frac{1 - 3H_1 + 3H_1H_2}{2H_1(1 + H_2)}, \\
		C_2 
	&=& 
		\left(\frac{1}{T}\right)^{H_2 - 3H_1H_2}
		\frac{\calH_{H_1}}{H_1\sqrt{\pi (1 + H_2)}}
		\left(\frac{1}{2}\right)^{\frac{1}{2H_1} + 1}
		H_2^\frac{1 - 2H_1 + H_1H_2}{2H_1(1 + H_2)},  \\
	C_3 &=&  
	T^{H_1H_2}\frac{\calH_{H_2}}{\sqrt{\pi (1 + H_2)}} 
	\left(\frac{1}{2}\right)^{\frac{1}{2H_2} - 1}
	H_2^\frac{3H_2 - 1}{2 + 2H_2},	\ \ \ \ \ \ 
		C_4 
	= 
		T^{H_1H_2}
		\frac{1}{2\sqrt{\pi(1 + H_2)}} H_2^\frac{H_2}{2(1 + H_2)}.
\end{eqnarray*}	
\end{Proposition}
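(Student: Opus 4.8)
The plan is to obtain Proposition \ref{prop.iter.fbm} as a direct specialization of Proposition \ref{th.fbm}, taking $\{X(t)\} = \{B_{H_2}(t)\}$ and $\{Y(t)\} = \{B_{H_1}(t)\}$. Since $\{B_{H_2}(t)\}$ is by definition a centered Gaussian process with stationary increments, a.s.\ continuous sample paths and $B_{H_2}(0)=0$, and $\{B_{H_1}(t)\}$ is independent of it with a.s.\ continuous sample paths, the only hypothesis of Proposition \ref{th.fbm} that needs to be checked is that $\{B_{H_1}(t)\}$ satisfies {\bf L1, L2}, i.e.\ that $\calM = \sup_{s\in[0,T]}B_{H_1}(s)$ and $\calK = -\inf_{s\in[0,T]}B_{H_1}(s)$ lie in the Weibullian class.

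First I would reduce this to the interval $[0,1]$. By $H_1$-self-similarity, $\calM \stackrel{d}{=} T^{H_1}\sup_{s\in[0,1]}B_{H_1}(s)$, and, using in addition $\{B_{H_1}(t)\}\stackrel{d}{=}\{-B_{H_1}(t)\}$, also $\calK \stackrel{d}{=} T^{H_1}\sup_{s\in[0,1]}B_{H_1}(s)$; in particular $\calM\stackrel{d}{=}\calK$. It therefore suffices to identify the Weibullian parameters of $\sup_{s\in[0,1]}B_{H_1}(s)$, and these are classical, splitting precisely according to whether $H_1<1/2$, $H_1=1/2$ or $H_1>1/2$: for $H_1<1/2$ the variance $t^{2H_1}$ attains its maximum at the endpoint $t=1$ with the local correlation index $2H_1$ strictly below the order $1$ of the variance decay there, so the Pickands--Piterbarg expansion for a boundary maximum gives a tail $\sim c\,u^{1/H_1-2}\Psi(u)$ with an explicit constant $c$ involving $\calH_{H_1}$; for $H_1=1/2$ the reflection principle gives $\pr(\sup_{[0,1]}B_{1/2}>u)=2\Psi(u)$; and for $H_1>1/2$ the maximum localizes at $t=1$ and $\pr(\sup_{[0,1]}B_{H_1}>u)\sim\Psi(u)$ (so the exponent is the same as for $H_1=1/2$, only the constant differs). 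Rescaling by $T^{H_1}$ then yields $\calM,\calK\in\calW\big(2,\tfrac{1}{2T^{2H_1}},\gamma^{(1)},C^{(1)}\big)$ with $\gamma^{(1)},C^{(1)}$ read off case by case (the rescaling contributing a factor $T^{-H_1\gamma^{(1)}}$ in $C^{(1)}$), confirming {\bf L1, L2} with $\alpha_1=\alpha_2=2$.

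Next I would feed these parameters into Proposition \ref{th.fbm} with $H=H_2$. There $\alpha=\min(\alpha_1,\alpha_2)=2$; since $\calM\stackrel{d}{=}\calK$ we are in case (iii) of Theorem \ref{th.short} (the third line of the case distinction in Corollary \ref{cor.exact}), so $\beta=\tfrac{1}{2T^{2H_1}}$, $\gamma=\gamma^{(1)}$ and $C=2C^{(1)}$. Crossing the three $H_1$-cases (which fix $\gamma^{(1)},C^{(1)}$) with the three $H_2$-cases of Proposition \ref{th.fbm} (which select between the $\tilde\gamma$, $\tilde C_1$, $\tilde C_2$ formulas and, in the $H_2=1/2$ case, contribute an extra factor $2$) reproduces the nine entries of the table. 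The rest is purely computational: substitute $\alpha=2$, variance index $\alpha_\infty=2H_2$, the above $\beta,\gamma,C$, and $H=H_2$ into $\tilde\alpha=\tfrac{2\alpha}{\alpha+2H}$, $\tilde\beta$, $\tilde\gamma$ and $\tilde C_1$ (resp.\ $2\tilde C_2$, resp.\ $\tilde C_2$), and collect the powers of $T$, of $H_2$, of $1/2$ and of $\pi$ together with $\calH_{H_1},\calH_{H_2}$ to recognise $\alpha=\tfrac{2}{H_2+1}$, the displayed $\beta$, and the pairs $(\gamma_i,C_i)$.

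The main obstacle is bookkeeping rather than conceptual: on one hand, one must state the Weibullian parameters of $\sup_{[0,1]}B_{H_1}$ correctly in the $H_1<1/2$ regime, invoking the Pickands--Piterbarg asymptotics for a Gaussian process whose variance is maximized at an endpoint and being careful to match the normalization of the Pickands constant used here (recall that in this paper $\calH_\alpha$ is defined through $B_{\alpha/2}$); on the other hand, one must carry out the nine algebraic simplifications and keep track of the several factors of $2$ — those from the coincidence $\calM\stackrel{d}{=}\calK$ and those from the $H=1/2$ sub-case of Proposition \ref{th.fbm} — so that $C_1,\dots,C_4$ come out exactly as stated.
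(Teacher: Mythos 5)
Your proposal is correct and follows essentially the same route as the paper's own proof: the paper likewise reduces to the classical Weibullian tails of $\sup_{s\in[0,1]}B_{H_1}(s)$ (quoting Lemma 4.2 of \cite{ArD11}, i.e.\ the Pickands--Piterbarg asymptotics, in the three $H_1$-regimes), uses stationarity of increments to get $-\inf_{s\in[0,1]}B_{H_1}(s)\stackrel{d}{=}\sup_{s\in[0,1]}B_{H_1}(s)$, and then applies Proposition \ref{th.fbm}. The only cosmetic difference is that the paper removes $T$ at the outset via $H_1H_2$-self-similarity of the iterated process and works on $[0,1]$, whereas you carry $T$ inside the Weibull parameters of $\calM$ and $\calK$; the two bookkeeping schemes are equivalent.
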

\begin{proof}
Due to self-similarity of fBm
\[
		\pr\left(\sup_{s \in [0,T]}B_{H_2}(B_{H_1}(s)) > u \right)
	=
		\pr\left(\sup_{s \in [0,1]}B_{H_2}(B_{H_1}(s)) > \frac{u}{T^{H_1H_2}} \right).
\]
Moreover, due to Lemma 4.2 in \cite{ArD11} (see also \cite{Pit96}, Theorem D3) 
\[
	\sup_{s\in[0,1]}B_{H_1}(s) \in
    \mathcal{W}\left(2, \frac{1}{2}, \frac{1}{H_1} - 3, \frac{\mathcal{H}_{H_1}}{H_1\sqrt{\pi}} 2^{-\frac{H_1+1}{2H_1}}\right)
		\	{\rm for}\ H_1 \in (0, 1/2);
\]
\[
    \sup_{s\in[0,1]}B_{H_1}(s) \in
    \mathcal{W}\left(2, \frac{1}{2}, -1, \frac{2}{\sqrt{2\pi}} \right)
 \ {\rm for}\ H_1 = 1/2; 
\]
\[
    \sup_{s\in[0,1]}B_{H_1}(s) \in
    \mathcal{W}\left(2, \frac{1}{2}, -1, \frac{1}{\sqrt{2\pi}} \right)
\ {\rm for}\ H_1 \in (1/2,1].
\]
Additionally, by stationarity of the increments of fBm
\[
		-\inf_{s \in [0,1]} B_{H_1}(s) 
	\stackrel{d}{=}
		\sup_{s \in [0,1]} B_{H_1}(s).
\]
Now, in order to complete the proof it suffices to apply Proposition \ref{th.fbm}.
\end{proof}

\subsection{The stationary case} \label{sec.short.st}
In this section, we analyze the asymptotic behavior of \refs{main.short2} for the case of
$\{X(t): t \in [0, \infty)\}$ being a centered {\it stationary} Gaussian process with a.s. continuous sample paths
and covariance function $r(t) := \cov(X(s), X(s + t))$.
We impose the following assumptions on $r(t)$ (see, e.g., \cite{Pit96}): \\
\\
{\bf D1} \ \ $r(t) = 1 - C|t|^\alpha + o(|t|^\alpha)$ as $t \to 0$, with $\alpha \in (0,2]$ and $C>0$; \\
{\bf D2} \ \ $r(t) < 1$ for all $t > 0$.\\

In this case, we are able to give the exact form of the asymptotics for general class of 
stochastic processes $\{Y(t) : t \in [0, \infty)\}$ that are independent of $\{X(t)\}$, have a.s. continuous sample paths and finite 
average {\it span} over interval $[0,T]$. Therefore, we assume that \\ \\
{\bf S1}\ \ 
$
		\E\left[\sup_{s \in [0,T]} Y(s) - \inf_{s \in [0,T]} Y(s)\right] < \infty.
$	\\
\begin{Proposition}		\label{st.short}
Let $\{X(t) : t \in \R \}$ be a centered stationary Gaussian process with covariance function $r(t)$ that satisfies {\bf D1, D2} and
 $\{Y(t) : t \in [0, \infty)\}$ be an independent of $\{X(t)\}$ stochastic process with a.s. continuous sample paths that satisfies {\bf S1}.
Then
\[
		\pr\left(\sup_{s \in [0, T]} X(Y(s)) > u\right) 
	=	 
		\E (\calT) C^\frac{1}{\alpha} \mathcal{H}_\alpha u^\frac{2}{\alpha} \Psi(u) (1 + o(1)) 
\]
as $u \toi$, where 
$\calT = \sup_{s \in [0,T]} Y(s) - \inf_{s \in [0,T]} Y(s)$.
\end{Proposition}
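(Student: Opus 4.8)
The plan is to condition on the process $\{Y(s)\}$ and reduce the problem to the known asymptotics for the supremum of a stationary Gaussian process over a \emph{random} compact interval. Conditionally on $\{Y(s) : s \in [0,T]\}$, the image $\{Y(s) : s \in [0,T]\}$ is, by a.s. path continuity, a compact interval $[\inf_s Y(s), \sup_s Y(s)] = [\calK^-, \calM^+]$ say, of length $\calT = \calM^+ - \calK^-$; moreover $\sup_{s \in [0,T]} X(Y(s)) = \sup_{t \in [\calK^-, \calM^+]} X(t)$ because $X$ is sampled over exactly this interval. Since $\{X(t)\}$ is stationary, the distribution of $\sup_{t \in [\calK^-, \calM^+]} X(t)$ depends on $(\calK^-, \calM^+)$ only through $\calT$, so we obtain
\[
		\pr\left(\sup_{s\in[0,T]} X(Y(s)) > u\right)
	=
		\E\left[\, g_u(\calT) \,\right],
		\qquad
		g_u(\tau) := \pr\left(\sup_{t \in [0,\tau]} X(t) > u\right),
\]
the expectation being over the law of $\calT$, which is independent of $\{X(t)\}$ and has finite mean by assumption \textbf{S1}. (One should be slightly careful on the event $\calT = 0$, where $g_u(0) = \Psi(u) = o(u^{2/\alpha}\Psi(u))$ and so contributes nothing to the leading order; similarly the measurability of $\tau \mapsto g_u(\tau)$ follows from monotonicity.)

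Next I would invoke the cited result --- Theorem 3.1 of \cite{ArD12}, quoted in the Introduction --- which states that for a stationary Gaussian $\{X(t)\}$ satisfying \textbf{D1, D2} and a nonnegative random variable $\calT$ with $\E\calT < \infty$, independent of $\{X(t)\}$,
\[
		\pr\left(\sup_{s \in [0,\calT]} X(s) > u\right)
	=
		\E(\calT)\, C^{1/\alpha} \calH_\alpha\, u^{2/\alpha}\,\Psi(u)\,(1 + o(1))
		\qquad \text{as } u \toi .
\]
Applying this with $\calT = \sup_{s\in[0,T]} Y(s) - \inf_{s\in[0,T]} Y(s)$ delivers exactly the claimed asymptotics. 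So, modulo the reduction above, the statement is an immediate corollary.

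The main obstacle, and the only place requiring genuine care, is the first step: justifying the identity $\sup_{s\in[0,T]} X(Y(s)) = \sup_{t\in[\calK^-,\calM^+]} X(t)$ rigorously and checking that Fubini/conditioning is legitimate. For the first point, a.s. continuity of $\{Y(s)\}$ makes $Y([0,T])$ a compact interval, $\{X(t)\}$ is a.s. continuous, and the composition $s\mapsto X(Y(s))$ is then a.s. continuous and attains its supremum; since $Y([0,T]) = [\calK^-,\calM^+]$ the two suprema coincide pointwise on a set of full measure. For the conditioning, one uses the independence of the two processes together with a regular conditional distribution for $\{X(t)\}$ given $\sigma(Y)$; measurability of $(\omega,u)\mapsto g_u(\calT(\omega))$ is not an issue since $g_u$ is nonincreasing in $\tau$ hence Borel. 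If one wanted to avoid quoting \cite{ArD12} as a black box, the alternative would be to re-run a Pickands-type argument directly, splitting $[\calK^-,\calM^+]$ into blocks of length $u^{-2/\alpha}T_0$, using the standard double-sum bounds for the stationary field uniformly over the block count $\lceil \calT u^{2/\alpha}/T_0\rceil$, and dominating by $\E\calT < \infty$ via dominated convergence --- but since the result is already available in the literature in exactly the needed random-interval form, the clean route is to cite it. I therefore expect the proof to be short, with essentially all the content in the measurability/path-continuity reduction.
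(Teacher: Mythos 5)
Your proposal is correct and takes essentially the same route as the paper: reduce via a.s.\ path continuity and stationarity of $\{X(t)\}$ to $\pr\left(\sup_{t \in [0,\calT]} X(t) > u\right)$ with $\calT = \sup_{s \in [0,T]} Y(s) - \inf_{s \in [0,T]} Y(s)$, and then apply Theorem 3.1 of \cite{ArD12} using $\E\calT < \infty$ from {\bf S1}. The additional remarks on conditioning, measurability, and the event $\calT = 0$ are harmless extra detail beyond what the paper's two-line proof records.
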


\begin{proof}
Due to stationarity of the process $\{X(t)\}$, we have
\begin{eqnarray*}
		\pr\left(\sup_{t \in [0, T]} X(Y(t)) > u\right)
	&=&
		\pr\left(\sup_{t \in [\inf_{s \in [0, T]}Y(s),\ \sup_{s \in [0, T]}Y(s) ]} X(t) > u\right)	\\
	&=&
		\pr\left(\sup_{t \in [0, \calT]} X(t) > u\right).
\end{eqnarray*}
Now, in order to complete the proof it suffices to apply Theorem 3.1 in \cite{ArD12}.
\end{proof}

\begin{Rem}
Equivalently, Proposition \ref{st.short} states that
\begin{eqnarray}
		\pr\left(\sup_{t \in [0, T]} X(Y(t)) > u\right)
	\nonumber
	&=&
		\E(\calT)
		\, \pr\left(\sup_{t \in [0, 1]} X(t) > u\right) (1 + o(1))
\end{eqnarray}
as $u \toi$, where $\calT = \sup_{s \in [0,T]} Y(s) - \inf_{s \in [0,T]} Y(s)$.
\end{Rem}

\section{Long timescale case}		\label{sec.long.st}

In this section, we investigate
\begin{equation}	\label{main.long2}
		\lim_{u \toi} \pr\left(\sup_{s \in [0,h(u)]} X(Y(s)) >u \right)
\end{equation}
for a suitably chosen function $h(u)$. 

In order to formulate the results, it is convenient to introduce the notation
\[
		\sigma^{-1}(t) := \inf\{y \in [0,\infty) : \sigma(y) > t\}
\] 
for the generalized inverse of the function $\sigma(t)$.\\

We start with the observation that \refs{main.long2} can be straightforwardly obtained for any independent, self-similar processes
$\{X(t)\}$ and $\{Y(t)\}$ with a.s. continuous sample paths. 
\begin{Rem}
Let $\{X(t) : t \in \R\}$ and $\{Y(t): t \in [0, \infty)\}$ be independent, self-similar stochastic processes 
with a.s. continuous sample paths and self-similarity indexes $\lambda_X$ and $\lambda_Y$
respectively. Then, for
$h(u) = u^{1/\lambda_X\lambda_Y}(1 + o(1))$ as $u \toi$, we have
\[
		\lim_{u \toi}
		\pr\left(\sup_{t \in [0, h(u)]} X(Y(t)) > u\right) 
	= 
		\pr\left(\sup_{t \in [\inf_{s \in [0,1]} Y(s),\ \sup_{s \in [0,1]} Y(s)]}
		X(t) > 1\right).
\]
\end{Rem}

In the next theorem, we extend this observation to the case of $\{X(t) : t \in \R\}$ and $\{Y(t) : t \in [0,\infty)\}$ 
being two independent, centered Gaussian processes with stationary increments, a.s. continuous sample
paths, $X(0) = 0$ and $Y(0) = 0$ a.s., and variance functions $\sigma_X^2(t) := \var(X(t))$ and
$\sigma_Y^2(t) := \var(Y(t))$ respectively. 
We assume that variance functions of both processes satisfy
the following assumptions 
\\ \\
{\bf B1}\ \ $\sigma^2(t) \in C([0, \infty))$ is ultimately strictly increasing ;	\\
{\bf B2}\ \ $\sigma^2(t)$ is regularly varying at $\infty $ with parameter
$\alpha \in (0 , 2]$;\\
{\bf B3}\ \ $\sigma^2(t)$ is regularly varying at $0$ with parameter
$\beta \in (0 , 2]$. \\

In order to formulate the result, it is convenient to introduce the
notation
\[
		\mathcal{L}(\alpha_X, \alpha_Y) 
	=
		\pr\left(\sup_{t \in [\inf_{s \in [0,1]} 
		B_{\alpha_Y/2}(s),\ \sup_{s \in [0,1]} B_{\alpha_Y/2}(s)]}
		B_{\alpha_X/2}(t) > 1\right),
\]
where $\left\{B_{\alpha_X/2}(t)\right\}$, 
$\left\{B_{\alpha_Y/2}(t)\right\}$ are independent 
fractional Brownian motions with Hurst parameters $\frac{\alpha_X}{2}$ and 
$\frac{\alpha_Y}{2}$ respectively.

\begin{Theorem}		\label{long.st.incr}
Let $\{ X(t): t \in \R \}$ and
$\{Y(t): t \in [0, \infty)\}$ be independent, centered Gaussian processes with stationary increments that satisfy
{\bf B1 -- B3} with parameters $\alpha_X, \beta_X, \alpha_Y, \beta_Y$ respectively. Then, for
$h(u) = \sigma_Y^{-1}(\sigma_X^{-1}(u))(1 + o(1))$ as  $u \toi$, we have
\[
		\lim_{u \toi}
		\pr\left(\sup_{t \in [0, h(u)]} X(Y(t)) > u\right) 
	= 
		\mathcal{L}(\alpha_X, \alpha_Y).
\]

\end{Theorem}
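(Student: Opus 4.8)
The plan is to rescale both Gaussian processes to their fractional Brownian motion limits and then pass to the limit with the continuous mapping theorem. The function $h$ is tuned exactly so that, after rescaling, the random range of $\{Y(t)\}$ over $[0,h(u)]$ and the spatial scale at which $\{X(t)\}$ reaches level $u$ become comparable, which is what produces the iterated--fBm expression $\mathcal{L}(\alpha_X,\alpha_Y)$ in the limit.

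First I would reduce to a statement about a supremum over a random interval. Since $\{Y(t)\}$ has a.s. continuous paths, $Y([0,h(u)])$ is a.s. the interval $[\underline{Y}_u,\overline{Y}_u]$ with $\underline{Y}_u:=\inf_{s\in[0,h(u)]}Y(s)\le 0\le\overline{Y}_u:=\sup_{s\in[0,h(u)]}Y(s)$, so
\[
\pr\Big(\sup_{t\in[0,h(u)]}X(Y(t))>u\Big)=\pr\Big(\sup_{t\in[\underline{Y}_u,\overline{Y}_u]}X(t)>u\Big).
\]
Set $a_u:=\sigma_X^{-1}(u)$. By \textbf{B1}--\textbf{B2}, $\sigma_X$ and $\sigma_Y$ are ultimately continuous, strictly increasing, and regularly varying at $\infty$ with positive indices, hence $a_u\toi$, $h(u)\toi$, $\sigma_X(a_u)=u(1+o(1))$, and, using the hypothesis $h(u)=\sigma_Y^{-1}(\sigma_X^{-1}(u))(1+o(1))$ together with the uniform convergence theorem for regularly varying functions, $\sigma_Y(h(u))=a_u(1+o(1))$. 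Substituting $t\mapsto a_u t$ in the $X$-supremum and dividing by $\sigma_X(a_u)$, the probability equals
\[
\pr\Big(\sup_{t\in[\,\underline{Y}_u/a_u,\ \overline{Y}_u/a_u\,]}\frac{X(a_ut)}{\sigma_X(a_u)}>v_u\Big),\qquad v_u:=\frac{u}{\sigma_X(a_u)}\to1,
\]
where $\overline{Y}_u/a_u=(1+o(1))\,\overline{Y}_u/\sigma_Y(h(u))$ and likewise for $\underline{Y}_u$, so that the interval is, up to an asymptotically negligible dilation, a functional of the rescaled process $\{Y(h(u)s)/\sigma_Y(h(u)):s\in[0,1]\}$.

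The core step will be the joint weak convergence, in $C(\R)\times C([0,1])$ with uniform convergence on compacta on the first factor,
\[
\Big(\frac{X(a_ut)}{\sigma_X(a_u)},\ \frac{Y(h(u)s)}{\sigma_Y(h(u))}\Big)\ \Longrightarrow\ \big(B_{\alpha_X/2}(t),\ B_{\alpha_Y/2}(s)\big),
\]
with the two limiting fBms independent. By independence of $\{X(t)\}$ and $\{Y(t)\}$ it suffices to treat each marginal. Finite-dimensional convergence is immediate from stationarity of increments, the identity $\cov(X(a_ut),X(a_us))=\tfrac12\big(\sigma_X^2(a_ut)+\sigma_X^2(a_us)-\sigma_X^2(a_u|t-s|)\big)$, and $\sigma_X^2(a_ur)/\sigma_X^2(a_u)\to r^{\alpha_X}$ from \textbf{B2} (uniform convergence theorem), and similarly for $Y$. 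Tightness will follow from the Kolmogorov--Chentsov criterion: for a Gaussian process the increment moments are powers of $\E(X(a_ut)-X(a_us))^2/\sigma_X^2(a_u)=\sigma_X^2(a_u|t-s|)/\sigma_X^2(a_u)$, and by the Potter bounds this ratio is dominated, for $|t-s|$ in any bounded set and all large $u$, by $C_{T_0}|t-s|^{\rho}$ for some $\rho>0$; here \textbf{B3} (regular variation at $0$ with index $\beta_X>0$) is essential, as it controls $\sigma_X^2(a_u|t-s|)$ in the regime $a_u|t-s|$ small, while \textbf{B2} controls the complementary regime. The analogous estimate holds for $Y$.

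Finally I would apply the continuous mapping theorem with the bounded functional $\Phi(f,g):=\ind\{\sup_{t\in[\inf_{[0,1]}g,\ \sup_{[0,1]}g]}f(t)>1\}$. This $\Phi$ is continuous at every pair $(f,g)$ with $\sup_{t\in[\inf_{[0,1]}g,\ \sup_{[0,1]}g]}f(t)\neq1$, and the limit law assigns probability $0$ to the complement: a.s. $\inf_{[0,1]}B_{\alpha_Y/2}<0<\sup_{[0,1]}B_{\alpha_Y/2}$, so conditionally on $B_{\alpha_Y/2}$ one takes the supremum of the independent process $B_{\alpha_X/2}$ over a nondegenerate compact interval containing $0$, and such a supremum has a continuous distribution on $(0,\infty)$ (see, e.g., \cite{Pit96}), hence no atom at $1$. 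Since $v_u\to1$ and the rescaled endpoints differ from $\overline{Y}_u/\sigma_Y(h(u))$, $\underline{Y}_u/\sigma_Y(h(u))$ by factors tending to $1$, a routine perturbation argument — using tightness of the endpoints to truncate the domain of $f$ to a fixed compact $[-M,M]$ at the cost of $\varepsilon$, then letting $\varepsilon\downarrow0$ — combined with the displayed weak convergence gives
\[
\pr\Big(\sup_{t\in[0,h(u)]}X(Y(t))>u\Big)\ \longrightarrow\ \pr\Big(\sup_{t\in[\inf_{[0,1]}B_{\alpha_Y/2},\ \sup_{[0,1]}B_{\alpha_Y/2}]}B_{\alpha_X/2}(t)>1\Big)=\mathcal{L}(\alpha_X,\alpha_Y).
\]
The main obstacle I anticipate is the uniform-in-$u$ tightness in the functional limit theorem, which forces one to use both the tail (\textbf{B2}) and the near-origin (\textbf{B3}) behaviour of the variance functions; a secondary delicate point is checking that $\Phi$ is an a.s.-continuity point of the limit, i.e. that the supremum of $B_{\alpha_X/2}$ over the random interval carries no atom at level $1$, together with making the first reduction — replacing the prescribed $h(u)$ by the exact composed inverse — rigorous via regular variation.
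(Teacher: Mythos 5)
Your proposal is correct, and it reaches the result by a genuinely different route from the paper. The paper does not prove a functional limit theorem: after the same initial reduction to $\pr\bigl(\sup_{t\in[\calV_u,\calW_u]}X_{\sigma_Y(h(u))}(t)>u/\sigma_X(\sigma_Y(h(u)))\bigr)$, it invokes Lemma 5.2 of \cite{BDZ04}, which directly supplies the two convergence facts it needs — weak convergence of the pair $(\calV_u,\calW_u)$ to $(\inf_{[0,1]}B_{\alpha_Y/2},\sup_{[0,1]}B_{\alpha_Y/2})$ and convergence of $\sup_{s\in[v,w]}X_{\sigma_Y(h(u))}(s)$ to $\sup_{s\in[v,w]}B_{\alpha_X/2}(s)$ uniformly over compact rectangles — and then passes to the limit by hand, splitting the probability into three integrals over the regions $\{\calV_u\le -A_\infty\}$, $\{(\calV_u,\calW_u)\in[-A_\infty,0]\times[0,A_\infty]\}$, $\{\calW_u>A_\infty\}$ and using that $\pr\bigl(\sup_{t\in[v,w]}B_{\alpha_X/2}(t)>1\bigr)$ is bounded and continuous in $(v,w)$, before letting $A_\infty\toi$ and $\varepsilon\to0$. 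You instead prove process-level convergence (fdd via regular variation of the variances, tightness via Kolmogorov--Chentsov with Potter-type bounds, where {\bf B3} is indeed the essential ingredient) and conclude by the continuous mapping theorem applied to the indicator functional, checking absence of an atom at level $1$. Your route is more self-contained (it does not lean on \cite{BDZ04}) at the cost of carrying the tightness estimate; the paper's route avoids tightness altogether but must do the $\varepsilon$/$A_\infty$ bookkeeping explicitly — bookkeeping which, note, is essentially the same as the "routine perturbation argument" you only sketch for handling $v_u\to1$ and the $(1+o(1))$ dilation of the interval endpoints, so that step deserves the same care in a full write-up. Two small points: your claim that a.s. $\inf_{[0,1]}B_{\alpha_Y/2}<0<\sup_{[0,1]}B_{\alpha_Y/2}$ fails in the degenerate case $\alpha_Y=2$ (where $B_1(t)=t\calN$ and one endpoint is $0$ with probability $1/2$), but the no-atom-at-$1$ conclusion, and hence your argument, survives since the interval is still a.s. nondegenerate; and both proofs ultimately rely on the same continuity/no-atom property of $\sup_{[v,w]}B_{\alpha_X/2}$, which the paper states as an observation and you justify via the general theory of suprema of Gaussian processes.
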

The proof of Theorem \ref{long.st.incr} is presented in Section \ref{long.st.incr.proof}. \\

The second part of this section focuses on the analysis of limiting behavior of \refs{main.long2} in the case of $\{X(t) : t \in \R \}$ being
a centered stationary Gaussian process with a.s. continuous sample paths
and covariance function $r(t) := \cov(X(s), X(s + t))$ that satisfies
\\ \\
{\bf D1}\ \  $r(t) = 1 - C|t|^\alpha + o(|t|^\alpha)$ as $t \to 0$, with $\alpha \in (0,2]$ and $C>0$; \\
{\bf D2}\ \  $r(t) < 1$ for all $t > 0$;	\\
{\bf D3}\ \  $r(t)\log(t) \to r$ as $t \to \infty$, with $r \in [0, \infty)$.
\\ 

We study \refs{main.long2} for both weakly and strongly dependent stationary Gaussian processes, i.e.,
for $r = 0$ and $r > 0$ respectively. We refer to \cite{Tan12, Tan13} for recent results on
asymptotic behavior of supremas of strongly dependent Gaussian processes. 

In this settings, in Theorem \ref{long.stat.main}, we provide \refs{main.long2} in the case of $\{Y(t): t \in [0, \infty)\}$ 
being a centered Gaussian process with stationary increments and 
variance function $\sigma_Y^2(t)$ that satisfies conditions {\bf B1 -- B3}.
Moreover, in Proposition \ref{long.st.ss}, we analyze \refs{main.long2} for self-similar process $\{Y(t)\}$ that is not necessarily Gaussian.

\begin{Theorem}		\label{long.stat.main}
Let $\{X(t): t \in \R\}$ be a centered stationary Gaussian process with covariance function 
that satisfies {\bf D1 -- D3} and $\{Y(t): t \in [0,\infty) \}$ be an independent of $\{X(t)\}$ Gaussian process  with 
a.s. continuous sample paths, stationary increments, and variance function $\sigma^2_Y(t)$ that satisfies {\bf B1 -- B3}
with parameters $\alpha_Y, \beta_Y$. Then, for
$
	h(u) 
		= 
	\sigma_Y^{-1} \left(\left(C^\frac{1}{\alpha} \mathcal{H}_\alpha 
					u^\frac{2}{\alpha} \Psi(u)\right)^{-1}\right) (1 + o(1))$ as $u \toi$,

		we have
\[
	\lim_{u \toi}
	\pr\left(\sup_{s \in [0, h(u)]} X(Y(s)) > u\right) 
=	
	1 - \E\exp\left(- \calT \exp(-r + \sqrt{2r} \calN)\right),
\]
	where 
$\calT = \sup_{s \in [0, 1]} B_{\alpha_Y/2}(s) - \inf_{s \in [0, 1]} B_{\alpha_Y/2}(s)$
and $\calN$ is a normal random variable independent of $\calT$.
\end{Theorem}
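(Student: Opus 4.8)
The plan is to condition on the process $\{Y(t)\}$ and reduce the problem, as in Proposition \ref{st.short}, to the supremum of the stationary Gaussian process $\{X(t)\}$ over the random interval swept out by $\{Y(s)\}$ on $[0,h(u)]$. Since $\{X(t)\}$ is stationary, conditionally on $Y$ we have
\[
\pr\left(\sup_{s\in[0,h(u)]}X(Y(s))>u \ \Big|\ Y\right)
= \pr\left(\sup_{t\in[0,R_{h(u)}]}X(t)>u \ \Big|\ R_{h(u)}\right),
\]
where $R_{h(u)} = \sup_{s\in[0,h(u)]}Y(s) - \inf_{s\in[0,h(u)]}Y(s)$ is the span of $\{Y(s)\}$ over $[0,h(u)]$. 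The key scaling input is that $\{Y(t)\}$ has stationary increments with variance function $\sigma_Y^2$ regularly varying at $\infty$ with index $\alpha_Y$; by the standard Lamperti-type argument this forces $Y(\lfloor h(u)\cdot\rfloor)/\sigma_Y(h(u))$ to converge weakly (in $C[0,1]$) to $B_{\alpha_Y/2}$, hence
\[
\frac{R_{h(u)}}{\sigma_Y(h(u))} \ \xrightarrow{d}\ \calT := \sup_{s\in[0,1]}B_{\alpha_Y/2}(s) - \inf_{s\in[0,1]}B_{\alpha_Y/2}(s).
\]

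The second ingredient is the asymptotics of $\pr(\sup_{t\in[0,R]}X(t)>u)$ for a deterministic window $R=R(u)$ that is allowed to grow with $u$, in the strongly dependent regime \textbf{D3}. Here I would invoke the results of \cite{Tan12,Tan13}: for $r>0$ and a window of length $T(u)$ chosen so that $T(u)\,C^{1/\alpha}\calH_\alpha u^{2/\alpha}\Psi(u)\to x$, one has
\[
\pr\left(\sup_{t\in[0,T(u)]}X(t)>u\right) \ \to\ 1-\E\exp\!\left(-x\,\exp(-r+\sqrt{2r}\,\calN)\right),
\]
the $\exp(-r+\sqrt{2r}\calN)$ factor coming from the Gaussian field governing the long-range fluctuations of the local maxima (the case $r=0$ collapsing to $1-e^{-x}$, consistent with a Poissonian picture). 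With the stated choice $h(u)=\sigma_Y^{-1}\!\big((C^{1/\alpha}\calH_\alpha u^{2/\alpha}\Psi(u))^{-1}\big)(1+o(1))$ we get $\sigma_Y(h(u))\,C^{1/\alpha}\calH_\alpha u^{2/\alpha}\Psi(u)\to 1$, so the deterministic-window asymptotics evaluated at window length $R_{h(u)}$, rescaled, has "effective length" $R_{h(u)}/\sigma_Y(h(u))\to\calT$.

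To combine the two, I would write, with $\psi(u):=C^{1/\alpha}\calH_\alpha u^{2/\alpha}\Psi(u)$,
\[
\pr\left(\sup_{s\in[0,h(u)]}X(Y(s))>u\right)
= \E\!\left[\,g_u\!\big(R_{h(u)}\,\psi(u)\big)\right],
\qquad
g_u(x) := \pr\left(\sup_{t\in[0,x/\psi(u)]}X(t)>u\right),
\]
and argue that $g_u(\cdot)\to g_\infty(\cdot):= 1-\E\exp(-\cdot\,\exp(-r+\sqrt{2r}\calN))$ uniformly on compacts (from the Tang--Hashorva-type theorem, uniformity in the level $x$ is available), while $R_{h(u)}\psi(u)\xrightarrow{d}\calT$ since $\psi(u)\sigma_Y(h(u))\to1$. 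A dominated-convergence / Slutsky argument then yields $\E[g_u(R_{h(u)}\psi(u))]\to\E[g_\infty(\calT)]$, which is exactly $1-\E\exp(-\calT\exp(-r+\sqrt{2r}\calN))$ with $\calN$ independent of $\calT$. The domination needed for the tails of $R_{h(u)}$ is mild because $g_u\le 1$ and one only needs tightness of $R_{h(u)}\psi(u)$, already supplied by the weak convergence.

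The main obstacle I anticipate is making the interchange of limits fully rigorous: one needs (a) the uniformity over compact level-sets in the strongly dependent supremum asymptotics, so that $g_u\to g_\infty$ uniformly on compacts, and (b) control of the contribution from atypically large spans $R_{h(u)}$, i.e.\ a uniform-in-$u$ bound $\pr(\sup_{t\in[0,M/\psi(u)]}X(t)>u)\le \kappa(M)$ with $\kappa(M)\to 0$ slowly enough, together with tightness of $\{R_{h(u)}\psi(u)\}$. Both are essentially available from \cite{Tan12,Tan13} and from the functional CLT for Gaussian processes with regularly varying variance, but marrying the random-window reduction (via conditioning on $Y$, as in Proposition \ref{st.short}) with the non-uniformly-bounded deterministic-window asymptotics is the delicate point; everything else is bookkeeping with $\sigma_Y^{-1}$ and regular variation.
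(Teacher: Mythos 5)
Your proposal is correct and follows essentially the same route as the paper's proof: reduce by stationarity of $X$ to the supremum over the random span of $Y$, use the weak convergence of the normalized span $\calT_u \Rightarrow \calT$ (the paper cites Lemma 5.2 of \cite{BDZ04} where you invoke a Lamperti-type functional limit), apply the Tan--Hashorva growing-window asymptotics (Lemma 3.3 of \cite{Tan13}) uniformly on compact ranges, and dispose of atypically small and large spans by truncation at $A_0$ and $A_\infty$. Your Slutsky/dominated-convergence phrasing corresponds exactly to the paper's $I_1+I_2+I_3$ decomposition with $\varepsilon\to0$, $A_0\to0$, $A_\infty\toi$.
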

The proof of Theorem \ref{long.stat.main} is given in Section \ref{long.stat.main.proof}.

\begin{Proposition}		\label{long.st.ss}
Let $\{X(t) : t \in \R \}$ be a stationary Gaussian process with covariance function that satisfies {\bf D1 -- D3} and
 $\{Y(t) : t \in [0,\infty)\}$ be a self-similar stochastic process with parameter $\lambda_Y$,  independent of the process 
$\{X(t)\}$. Then, for 
$
	h(u) 
		= 
	\left[C^\frac{1}{\alpha} \mathcal{H}_\alpha u^\frac{2}{\alpha} \Psi(u) \right]^{-1/\lambda_Y} (1 + o(1))$ as $u \toi$, 
			we have
\[
		\lim_{u \toi}
		\pr\left(\sup_{s \in [0, h(u)]} X(Y(s)) > u\right) 
	=	 
		1 - \E\exp\left(- \calT \exp\left(-r + \sqrt{2r} \calN\right)\right),
\]
where 
$\calT = \sup_{s \in [0,1]} Y(s) - \inf_{s \in [0,1]} Y(s)$ and $\calN$ is a normal random variable 
independent of $\calT$.
\end{Proposition}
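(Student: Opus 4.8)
The plan is to use the self-similarity of $\{Y(t)\}$ to recast the event in terms of the supremum of the stationary process $\{X(t)\}$ over an interval whose (random) length is of the order dictated by $h(u)$, and then to pass to the limit inside the expectation over that length. Concretely, substituting $s\mapsto h(u)s$ and invoking self-similarity together with the independence of $\{X(t)\}$ and $\{Y(t)\}$ (so that, jointly in law, $\bigl(\{X(t)\},\{Y(h(u)s)\}_{s\in[0,1]}\bigr)\stackrel{d}{=}\bigl(\{X(t)\},\{h(u)^{\lambda_Y}Y(s)\}_{s\in[0,1]}\bigr)$), one has $\pr(\sup_{s\in[0,h(u)]}X(Y(s))>u)=\pr(\sup_{s\in[0,1]}X(h(u)^{\lambda_Y}Y(s))>u)$. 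Since $\{Y(t)\}$ has a.s.\ continuous paths, $\{h(u)^{\lambda_Y}Y(s):s\in[0,1]\}$ fills out the whole interval between its minimum and maximum, so by continuity of $\{X(t)\}$ the inner supremum equals $\sup_{t\in I_u}X(t)$ with $I_u=\bigl[h(u)^{\lambda_Y}\inf_{s\in[0,1]}Y(s),\,h(u)^{\lambda_Y}\sup_{s\in[0,1]}Y(s)\bigr]$. Conditioning on $\{Y(t)\}$ and using stationarity of $\{X(t)\}$ (the conditional law of $\sup_{t\in I_u}X(t)$ depends on $I_u$ only through its length) yields
\[
	\pr\left(\sup_{s\in[0,h(u)]}X(Y(s))>u\right)=\E\bigl[G_u\bigl(h(u)^{\lambda_Y}\calT\bigr)\bigr],\qquad G_u(\ell):=\pr\Bigl(\sup_{t\in[0,\ell]}X(t)>u\Bigr),
\]
where $\calT=\sup_{s\in[0,1]}Y(s)-\inf_{s\in[0,1]}Y(s)\ge0$ is independent of $\{X(t)\}$ (note $Y(0)=0$ a.s.\ by self-similarity).

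Next I would invoke the fine asymptotics of the supremum of a stationary Gaussian process with covariance obeying {\bf D1--D3} over a growing interval. Put $p(u):=C^{1/\alpha}\mathcal{H}_\alpha u^{2/\alpha}\Psi(u)$, so that $h(u)^{\lambda_Y}=p(u)^{-1}(1+o(1))$. The required input is: if $T(u)\toi$ with $T(u)p(u)\to\tau\in[0,\infty)$, then $\pr(\sup_{t\in[0,T(u)]}X(t)>u)\to1-\E\exp(-\tau\exp(-r+\sqrt{2r}\calN))$ as $u\toi$ — the weakly dependent case $r=0$ being the classical Pickands-type statement (see \cite{Pic69,Pit96}), the strongly dependent case $r>0$ the Mittal--Ylvisaker-type phenomenon established for this class in the course of the proof of Theorem \ref{long.stat.main} (cf.\ also \cite{Tan12,Tan13}). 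Applied with $T(u)=h(u)^{\lambda_Y}\ell$ for a fixed $\ell\ge0$ (so $T(u)p(u)\to\ell$) this gives, for each such $\ell$, $G_u(h(u)^{\lambda_Y}\ell)\to1-\E\exp(-\ell\exp(-r+\sqrt{2r}\calN))$ as $u\toi$.

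It then remains to push the limit through the expectation over $\calT$. Since $0\le G_u\le1$, dominated convergence applies along every sequence $u_n\toi$: $G_{u_n}(h(u_n)^{\lambda_Y}\calT)$ converges a.s.\ (and is bounded by $1$) to $1-\E(\exp(-\calT\exp(-r+\sqrt{2r}\calN))\mid\calT)$, whence $\E[G_{u_n}(h(u_n)^{\lambda_Y}\calT)]\to1-\E\exp(-\calT\exp(-r+\sqrt{2r}\calN))$ with $\calN$ taken independent of $\calT$; as the sequence was arbitrary, this is precisely the asserted limit. No moment assumption on $\calT$ is needed here.

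The main obstacle is the second step — the order-$p(u)^{-1}$ asymptotics of the supremum functional in the strongly dependent regime, where the Gumbel-type extremal limit picks up the random shift $\sqrt{2r}\calN$; this is exactly the heart of Theorem \ref{long.stat.main}. With $\{Y(t)\}$ here genuinely (and not merely asymptotically) self-similar, the first and third steps are routine: one uses only the exact scaling relation and the path continuity of $\{Y(t)\}$, with no regular-variation or convergence-to-fBm argument needed.
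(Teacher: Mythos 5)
Your proof is correct and its core coincides with the paper's: stationarity of $\{X(t)\}$ together with exact self-similarity and a.s.\ path continuity of $\{Y(t)\}$ (the latter is assumed globally in the paper and is indeed needed) reduce the probability to $\pr\bigl(\sup_{t\in[0,\,h(u)^{\lambda_Y}\calT]}X(t)>u\bigr)$ with $\calT=\sup_{s\in[0,1]}Y(s)-\inf_{s\in[0,1]}Y(s)$ independent of $\{X(t)\}$, and the limit is then extracted from the Tan--Hashorva asymptotics (Lemma 3.3 in \cite{Tan13}) for the supremum of a weakly or strongly dependent stationary Gaussian process over an interval whose length, multiplied by $C^{1/\alpha}\mathcal{H}_\alpha u^{2/\alpha}\Psi(u)$, converges. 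The only real divergence is in the final limit passage: the paper simply invokes ``the reasoning as in the proof of Theorem \ref{long.stat.main}'', i.e.\ the truncation of the span to $[A_0,A_\infty]$, the three-integral decomposition with the lemma applied uniformly in $t\in[A_0,A_\infty]$, and then $A_0\to0$, $A_\infty\toi$, $\varepsilon\to0$; you instead condition on $\calT$ and combine pointwise convergence of $G_u(h(u)^{\lambda_Y}\ell)$ for each fixed $\ell\ge0$ with dominated convergence, using only $0\le G_u\le1$. Your shortcut is legitimate precisely because here $\calT$ does not depend on $u$ (unlike $\calT_u$ in Theorem \ref{long.stat.main}, whose mere weak convergence is what forces the truncation and uniformity there), so no uniformity in $\ell$ and no moment condition on $\calT$ are required; what it buys is a slightly cleaner and shorter argument, while the paper's route simply recycles the machinery already set up for Theorem \ref{long.stat.main}.
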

\begin{proof}
Due to stationarity of the process $\{X(t)\}$ and self-similarity of the process $\{Y(t)\}$, we have
\begin{eqnarray}
		\pr\left(\sup_{t \in [0, h(u)]} X(Y(t)) > u\right)
	\nonumber
	&=&
		\pr\left(\sup_{t \in [0, \sup_{s \in [0, h(u)]}Y(s) - \inf_{s \in [0, h(u)]}Y(s)]} X(t) > u\right)\\
	\nonumber
	&=&	
		\pr\left(\sup_{t \in [0, (h(u))^{\lambda_Y} \calT]} X(t) > u\right) \\
	&=&		\label{eq.long.random}
		1 - \E\exp\left(-\calT \exp\left(-r + \sqrt{2r}\calN\right)\right),
\end{eqnarray}
where \refs{eq.long.random} follows by the reasoning as in the proof of Theorem \ref{long.stat.main}. 

\end{proof}
\begin{Rem}
Note, that in the case of weakly dependent stationary Gaussian process $\{X(t)\}$, that is, if $r = 0$ in {\bf D3}, we obtain
the following result 
\[
		\lim_{u \toi}
		\pr\left(\sup_{s \in [0, h(u)]} X(Y(s)) > u\right) 
	=
		1 - \E e^{- \calT},
\]
where in the setting of Theorem \ref{long.stat.main},
$h(u) = \sigma_Y^{-1} \left(\left(C^\frac{1}{\alpha} \mathcal{H}_\alpha 
					u^\frac{2}{\alpha} \Psi(u)\right)^{-1}\right) (1 + o(1))
$
as $u \toi$, and $\calT = \sup_{s \in [0, 1]} B_{\alpha_Y/2}(s) - \inf_{s \in [0, 1]} B_{\alpha_Y/2}(s)$;\\
and in the setting of Proposition \ref{long.st.ss},
$h(u) = \left[C^\frac{1}{\alpha} \mathcal{H}_\alpha u^\frac{2}{\alpha} \Psi(u) \right]^{-1/\lambda_Y}(1 + o(1))$
as $u \toi$, and $\calT = \sup_{s \in [0,1]} Y(s) - \inf_{s \in [0,1]} Y(s)$.
\end{Rem}
\section{Proofs}
In this section, we present detailed proofs of Theorem \ref{th.short}, Proposition \ref{th.fbm}, 
Theorem \ref{long.st.incr} and Theorem \ref{long.stat.main}.
\subsection{Proof of Theorem \ref{th.short}}		\label{th.short.proof}
In view of inclusion -- exclusion principle 
\begin{equation}	\label{in.ex}
		\pr\left(\sup_{s \in [0,T]} X(Y(s)) > u\right)
	=
		P_1(u) + P_2(u) - P_3(u),	
\end{equation}
where
\[
	P_1(u) = \pr\left(\sup_{s \in [-\calK, 0]} X(s) > u\right), \ \ \ \ \ \ 
	P_2(u) = \pr\left(\sup_{s \in [0,\calM]} X(s) > u\right),
\]
\[
	P_3(u) = \pr\left(\sup_{s \in [-\calK,0]} X(s) > u, \sup_{s \in [0,\calM]} X(s) > u\right).
\]
Observe that by definition of the process $\{X(t)\}$,
\begin{equation}		\label{eq.symetry}	
P_1(u)
	=
	 \pr\left(\sup_{s \in [0, \calK]} X(s) > u\right).
\end{equation}

The case (i) is a consequence of the fact that, by \refs{eq.symetry} and Theorem 3.1 in \cite{ArD11}, 
$\pr(\calK > u ) =  o(\pr(\calM > u))$ implies
$P_1(u) = o(P_2(u))$. Thus, 
\[	
	P_2(u) \le	\pr\left(\sup_{s \in [0,T]} X(Y(s)) > u\right) \le P_1(u) + P_2(u) = P_2(u) (1 + o(1))
\] 
as $u \toi$, which in view of Theorem 3.1 in \cite{ArD11}, completes the proof for the case (i).
A similar reasoning implies that for the case (ii), we have
\begin{eqnarray*}	
		P_1(u) \le	\pr\left(\sup_{s \in [0,T]} X(Y(s)) > u\right) \le P_1(u) + P_2(u) = P_1(u) (1 + o(1))
\end{eqnarray*}
as $u \toi$, which in view of Theorem 3.1 in \cite{ArD11}, completes the proof for the case (ii).\\
In order to prove (iii), without loss of generality, we assume that
\begin{equation}	\label{as.sup.Y}
		\pr(\calM > u) \ge \pr(\calK > u) (1 + o(1))
\end{equation}
as $u \toi$. Due to \refs{in.ex} combined with \refs{eq.symetry} and Theorem 3.1 in \cite{ArD11}, it suffices to show that $P_3(u)$ is negligible. We distinguish the case $\calK \le \calM$ and the case $\calK > \calM$ and obtain
\begin{eqnarray}
		P_3(u) 
	\nonumber
	&=& 
		\pr\left(\sup_{s \in [-\calK, 0]} X(s) > u, \sup_{s \in [0, \calM]} X(s) > u, \calK \le \calM\right) \\
	\nonumber
	&+&
		\pr\left(\sup_{s \in [-\calK, 0]} X(s) > u, \sup_{s \in [0, \calM]} X(s) > u, \calK > \calM\right) \\
	\nonumber
	&\le&	
		\pr\left(\sup_{s \in [-\calM, 0]} X(s) > u, \sup_{s \in [0, \calM]} X(s) > u\right) \\
	\nonumber
	&+&
		\pr\left(\sup_{s \in [-\calK, 0]} X(s) > u, \sup_{s \in [0, \calK]} X(s) > u\right) \\
	&\le&	\label{double.equal}
		2\pr\left(\sup_{s \in [-\calM, 0]} X(s) > u, \sup_{s \in [0, \calM]} X(s) > u\right) (1 + o(1))
\end{eqnarray}
as $u \toi$, where \refs{double.equal} is due to the assumption \refs{as.sup.Y}.

To find an upper bound of \refs{double.equal} it is convenient to make the following decomposition
\begin{eqnarray*}
\lefteqn{
		\pr\left(\sup_{s \in [-\calM, 0]} X(s) > u, \sup_{s \in [0, \calM]} X(s) > u\right) }\\
	&=&
		\left(\int_0^{a(u)} + \int_{a(u)}^{A(u)} + \int_{A(u)}^\infty \right) 
		\pr\left(\sup_{s \in [-w, 0]} X(s) > u, \sup_{s \in [0, w]} X(s) > u\right)  dF_\calM(w)\\
	&=&
		I_1 + I_2 + I_3,	
\end{eqnarray*}		
		
where
\begin{equation}	\label{au_Au}
		a(u) = u^\frac{2}{\alpha_\infty + 2 \alpha_1},
		A(u) = u^\frac{4}{2\alpha_\infty + \alpha_1}.
\end{equation}
Let $\varepsilon > 0$. We analyze each of the integrals $I_1, I_2, I_3$ separately.
\\
{\it Integral $I_1$:}
\begin{eqnarray}
		I_1
	\nonumber
	&\le&
		\int_0^{a(u)} \pr\left(\sup_{s \in [0, w]} X(s) > u\right) dF_\calM(w) \\
	\nonumber
	&\le&
		\pr\left(\sup_{s \in [0, a(u)]} X(s) > u\right)	\\
	&\le&		\label{ineq.sup.long}
		{\rm Const}\, a(u) \left(\frac{u}{\sigma_X(a(u))}\right)^\frac{2}{\alpha_\infty} \Psi\left(\frac{u}{\sigma_X(a(u))}\right)	\\
	\nonumber
	&\le&
		\exp\left(-u^{\frac{2\alpha_1}{\alpha_1 + \alpha_\infty} + \varepsilon}\right) (1 + o(1))
\end{eqnarray}
as $u \toi$, where \refs{ineq.sup.long} is due to (16) in \cite{ArD11} 
(see also the proof of Lemma 6.3 in \cite{ArD11}).	\\
{\it Integral $I_3$:}
\begin{eqnarray*}
		I_3
	&\le&
		\int_{A(u)}^\infty \pr\left(\sup_{s \in [0, w]} X(s) > u\right) dF_\calM(w)	\\
	&\le&
		\pr(\calM > A(u))	\\
	&=&
		C_1 (A(u))^{\gamma_1} \exp\left(-\beta_1 (A(u))^{\alpha_1} \right)(1 + o(1))	\\
	&\le&
		\exp\left(-u^{\frac{2\alpha_1}{\alpha_1 + \alpha_\infty} + \varepsilon}\right) (1 + o(1))
\end{eqnarray*}
as $u \toi$.		\\
The above, combined with the observation that for each $\eta > 0$ and sufficiently large $u$,
\begin{eqnarray*}
		\pr(X(\calM) > u)
	&=&
		\pr\left(\sigma_X(\calM) \calN > u\right)
	\ge
		\pr\left(\sigma_X(\calM) > u^\frac{\alpha_\infty}{\alpha_1 + \alpha_\infty}\right)
		\pr\left(\calN > u^\frac{\alpha_1}{\alpha_1 + \alpha_\infty}\right)	\\
	&\ge&
		\exp\left(- u^{\frac{2\alpha_1}{\alpha_1 + \alpha_\infty} + \eta}\right),
\end{eqnarray*}
leads to the conclusion that $I_1$ and $I_3$ are negligible. \\
{\it Integral $I_2$}: 
Observe that, due to {\bf A1}, $\sigma_X^2(|t|) \le \sigma_X^2(|t - s|)$, for each $(s,t) \in [-w, 0]\times[0,w]$. Hence
\begin{equation}		\label{var.bound}
		\var(X(s) + X(t))
	=
		2\sigma_X^2(|s|) + 2\sigma_X^2(|t|) - \sigma_X^2(|t - s|)
	\le	
		 3\sigma_X^2(w),
\end{equation}
for $(s, t) \in [-w, 0] \times [0,w]$.
Thus, according to the Borell inequality (see, e.g., \cite{Adl90}, Theorem 2.1), 
combined with \refs{var.bound}, $I_2$ is bounded by
\begin{eqnarray}
	\nonumber
	\lefteqn{
		\int_{a(u)}^{A(u)} \pr\left(\sup_{(s, t) \in [-w, 0] \times [0, w]} \left[ X(s) + X(t) \right] > 2u\right)  dF_\calM(w)}	\\
	&\le&		\label{bound.Borell}
		2 \int_{a(u)}^{A(u)} 
		\exp\left(- \frac{2u^2}{3\sigma_X^2(w)} 
					\left(1 - \frac{1}{2u} \E \left(\sup_{(s, t) \in [-w, 0] \times [0, w]} \left[X(s) + X(t)\right]\right)\right)^2 \right)		
					dF_\calM(w).
\end{eqnarray}
Moreover,
\begin{equation}	\label{mean.ineq}
		0 
	\le 
		\E \left(\sup_{(s, t) \in [-w, 0] \times [0, w]} \left[X(s) + X(t)\right]\right)
	\le
		2 \E \left(\sup_{s \in [0,w]} X(t)\right).
\end{equation}
To find the upper bound of $\E \sup_{t \in [0,w]} X(t)$, we use metric entropy method (see, e.g., \cite{Lif12},
Chapter 10).
At the beginning, for any ${\bb T} \subseteq \R$ define the semimetric	\\
\[
		d(s, t) := \sqrt{\E |X(t) - X(s)|^2} = \sigma_X(|t - s|).
\]
The metric entropy ${\bb H}_d(\bb T, \epsilon)$ is defined as $\log N_d(\bb T, \epsilon)$, 
where $N_d(\bb T, \epsilon)$
denotes the minimal number of points in an $\epsilon$-net in $\bb T$ with respect to the semimetric $d$.

Observe that for ${\bb T} = [0, w]$,
\[
		N_d(\bb T, \epsilon) 
	\le
		\frac{2w}{\sigma_X^{-1}(\epsilon)},
\]
which, in view of Theorem 10.1 in \cite{Lif12}, implies that
\begin{eqnarray}
		\E \sup_{t \in [0, w]} X(t)
	\nonumber
	&\le&
		4 \sqrt{2} \int_0^{\sigma_X(w)} \sqrt{\log \frac{2w}{\sigma_X^{-1}(\epsilon)}} \, d\epsilon	\\
	&\le&		\label{entr.bound1}
		4 \sqrt{2} \int_0^{\sqrt{D}w^\frac{\alpha_\infty}{2}} 
			\sqrt{\log \frac{2D^\frac{1}{\alpha_\infty} w}{\epsilon^\frac{2}{\alpha_\infty}}} \, d\epsilon	\\
	&=&			\label{entr.bound2}
		4 \sqrt{2} \int_{1/w}^\infty \frac{\sqrt{D} \alpha_\infty}{2} x^{-\frac{\alpha_\infty}{2} - 1}
		\sqrt{\log 2wx} \, dx	\\
	\nonumber
	&\le&		
		4 \alpha_\infty \sqrt{Dw} \int_{1/w}^\infty x^{-\frac{\alpha_\infty + 1}{2}}
		\, dx	\\
	&\le&			\label{entropy.bound}
		Bw^\frac{\alpha_\infty}{2},	
\end{eqnarray}
where $B = \frac{8\sqrt{D} \alpha_\infty}{\alpha_\infty -1}$, \refs{entr.bound1} is due to {\bf A3},
and \refs{entr.bound2} is by substitution $x := D^{1/\alpha_\infty} \epsilon^{-2/\alpha_\infty}$.\\
Finally, due to \refs{mean.ineq} combined with \refs{entropy.bound} and \refs{au_Au}
\[
		0 
	\le 
		\frac{1}{2u} \E \left(\sup_{(s, t) \in [-w, 0] \times [0, w]} \left[X(s) + X(t)\right] \right)
	\le 
		 B\frac{w^\frac{\alpha_\infty}{2}}{u}
	\le 
		B u^{-\frac{\alpha_1}{2\alpha_\infty + \alpha_1}},
\]
for each $w \in [a(u), A(u)]$, which implies that
\begin{eqnarray*}		
		\left(1 - \frac{1}{2u} \E \left(\sup_{(s, t) \in [-w, 0] \times [0, w]} \left[X(s) + X(t)\right]\right) \right)^2
	\to 
		1
\end{eqnarray*}
as $u \toi$, uniformly for $w \in [a(u), A(u)]$, and hence
\begin{equation}		\label{o_small}
		\exp\left(- \frac{2u^2}{3\sigma_X^2(w)} 
				\left(1 - \frac{1}{2u} \E \left(\sup_{(s, t) \in [-w, 0] \times [0, w]} \left[X(s) + X(t)\right] \right)\right)^2 \right)
	=
		o\left(\Psi\left(\frac{u}{\sigma_X(w)}\right)\right)
\end{equation}
as $u \toi$, uniformly for $w \in [a(u), A(u)]$. Thus, combining \refs{bound.Borell} with \refs{o_small}, we obtain, 
for sufficiently large $u$, the following
upper bound, 
\begin{eqnarray}
		I_2
	\nonumber
	\le
		2 \varepsilon \int_{a(u)}^{A(u)} \Psi\left(\frac{u}{\sigma_X(w)}\right)	dF_\calM(w)
	\nonumber
	\le		
		2\varepsilon \pr\left(\sup_{s \in [0,\calM]} X(s) > u\right),
\end{eqnarray}
which in view of Theorem 3.1 in \cite{ArD11}, implies that 
\[
		\limsup_{u \toi} \frac{I_2}{\pr\left(X(\calM) > u\right)}
	\le 
		2\varepsilon.
\]
In order to complete the proof it suffices to pass with $\varepsilon \to 0$.
\Halmos

\subsection{Proof of Proposition \ref{th.fbm}}		\label{th.fbm.proof}
The idea of the proof is analogous to the proof of Theorem \ref{th.short}, thus we present only
main steps of the argumentation. 
In view of inclusion -- exclusion principle 
\begin{equation}	\label{in.ex.BH}
		\pr\left(\sup_{s \in [0,T]} B_H(Y(s)) > u\right)
	=
		P_1(u) + P_2(u) - P_3(u),	
\end{equation}
where
$P_1(u) = \pr\left(\sup_{s \in [-\calK, 0]} B_H(s) > u\right)$,
$P_2(u) = \pr\left(\sup_{s \in [0,\calM]} B_H(s) > u\right)$,\\
$P_3(u) = \pr\left(\sup_{s \in [-\calK,0]} B_H(s) > u, \sup_{s \in [0,\calM]} B_H(s) > u\right)$.\\
Moreover observe that 
\begin{equation} \label{eq.symetry.fbm}
		P_1(u)
	=
		\pr\left(\sup_{s \in [0,\calK]} B_H(s) > u\right).
\end{equation}
Since the arguments for the cases $\pr(\calK > u) = o(\pr(\calM > u))$ as $u \toi$, and 
$\pr(\calM > u) = o(\pr(\calK > u))$ as $u \toi$
are similar to those in the proof of Theorem \ref{th.short}, then we focus on the case 
$\pr(\calK > u) = \frac{C_2}{C_1}\pr(\calM > u)(1 + o(1))$ as $u \toi$.\\
Without loss of generality, we assume that
\begin{eqnarray*}	
		\pr(\calM > u) \ge \pr(\calK > u) (1 + o(1))
\end{eqnarray*}
as $u \toi$. Due to \refs{in.ex.BH} combined with \refs{eq.symetry.fbm} and Theorem 4.1 in \cite{ArD11},
it suffices to show that $P_3(u)$ is negligible. In an analogous way to \refs{double.equal}, we obtain 
the following upper bound 
\begin{eqnarray*}
		P_3(u) 
	&\le&	\label{double.equal.BH}
		2\pr\left(\sup_{s \in [-\calM, 0]} B_H(s) > u, \sup_{s \in [0, \calM]} B_H(s) > u\right) (1 + o(1))
\end{eqnarray*}
as $u \toi$.
Then, we consider decomposition
\begin{eqnarray*}
\lefteqn{
		\pr\left(\sup_{s \in [-\calM, 0]} B_H(s) > u, \sup_{s \in [0, \calM]} B_H(s) > u\right) }\\
	&=&
		\left(\int_0^{a(u)} + \int_{a(u)}^{A(u)} + \int_{A(u)}^\infty \right) 
		\pr\left(\sup_{s \in [-w, 0]} B_H(s) > u, \sup_{s \in [0, w]} B_H(s) > u\right)  dF_\calM(w)\\
	&=&
		I_1 + I_2 + I_3,	
\end{eqnarray*}		
		
where
\begin{equation} \label{aa.fbm}
		a(u) = u^\frac{1}{H + \alpha_1},	\	\	\	\	
		A(u) = u^\frac{4}{4H + \alpha_1}.
\end{equation}		
Let $\varepsilon > 0$. We investigate the asymptotic behavior of each of the integrals.
\\
{\it Integral $I_1$:}
Due to self-similarity of $\{B_H(t)\}$ combined with Lemma 4.2 in \cite{ArD11}, we have, as $u \toi$,
\begin{eqnarray}
		I_1
	\nonumber
	\le
		\pr\left(\sup_{s \in [0, a(u)]} B_H(s) > u\right)
	=	
		\pr\left(\sup_{s \in [0,1]} B_H(s) > \frac{u}{(a(u))^H}\right)
	\le
		\exp\left(-u^{\frac{2\alpha_1}{\alpha_1 + H} + \varepsilon}\right) (1 + o(1)).
\end{eqnarray}
{\it Integral $I_3$:} We have, as $u \toi$,
\begin{eqnarray*}
		I_3
	\le
		\pr(\calM > A(u))
	\le
		\exp\left(-u^{\frac{2\alpha_1}{\alpha_1 + H} + \varepsilon}\right) (1 + o(1)).
\end{eqnarray*}
Observe that, due to Theorem 4.1 in \cite{ArD11}, for each $\eta > 0$ and sufficiently large $u$,
\begin{eqnarray*}
		\pr\left(\sup_{s \in [0,\calM]} B_H(s) > u\right)
	&\ge&
		\exp\left(- u^{\frac{2\alpha_1}{\alpha_1 + H} + \eta}\right) (1 + o(1))
\end{eqnarray*}
as $u \toi$. Thus, we conclude that $I_1$ and $I_3$ are negligible. \\
{\it Integral $I_2$}:
Observe that $|t|^{2H} \le |t - s|^{2H}$, for each 
$(s, t) \in [-w, 0] \times [0,w]$. Hence
\begin{equation}		\label{var.bound.BH}
		\var(B_H(s) + B_H(t))
	=
		2|s|^{2H} + 2|t|^{2H} - |t-s|^{2H}
	\le
		 3w^{2H}.
\end{equation}
for $(s, t) \in [-w, 0] \times [0,w]$. Thus, according to the Borell inequality (see, e.g., \cite{Adl90}, Theorem 2.1), 
combined with \refs{var.bound.BH}, $I_2$ is bounded by
\begin{eqnarray}
	\nonumber
	\lefteqn{
		\int_{a(u)}^{A(u)} \pr\left(\sup_{(s, t) \in [-w, 0] \times [0, w]} \left[ B_H(s) + B_H(t) \right] > 2u\right)  dF_\calM(w)}	\\
	&\le&		\label{bound.Borell.BH}
		2 \int_{a(u)}^{A(u)} 
		\exp\left(- \frac{2u^2}{3w^{2H}} 
					\left(1 - \frac{1}{2u} \E \left(\sup_{(s, t) \in [-w, 0] \times [0, w]} \left[B_H(s) + B_H(t)\right]\right)\right)^2 \right)		
					dF_\calM(w).
\end{eqnarray}
Moreover, due to self-similarity of $\{B_H(t)\}$  
\[
		0 
	\le 
		 \E \left(\sup_{(s, t) \in [-w, 0] \times [0, w]} \left[B_H(s) + B_H(t)\right]\right)	
	\le
		2 \E \left(\sup_{s \in [0,w]} B_H(s)\right)
	=
		Bw^H,
\]
where
$ B = 2\E \sup_{s \in [0,1]} B_H(s) $,
which due to \refs{aa.fbm}, implies that
\[
		\left(1 - \frac{1}{2u} \E \left(\sup_{(s, t) \in [-w, 0] \times [0, w]} \left[B_H(s) + B_H(t)\right]\right) \right)^2
	\to 
		1
\]
as $u \toi$, uniformly for $w \in [a(u), A(u)]$, and hence
\begin{equation}	\label{o_small.BH}
		\exp\left(- \frac{2u^2}{3w^{2H}} 
				\left(1 - \frac{1}{2u} \E \left(\sup_{(s, t) \in [-w, 0] \times [0, w]} \left[B_H(s) + B_H(t)\right] \right)\right)^2 \right)
	=
		o\left(\Psi\left(\frac{u}{w^H}\right)\right)
\end{equation}
as $u \toi$, uniformly for $w \in [a(u), A(u)]$. Thus, combining \refs{bound.Borell.BH} with \refs{o_small.BH},
we obtain, for sufficiently large $u$, the following
upper bound, 
\begin{eqnarray*}
		I_2
	\le
		2 \varepsilon \int_{a(u)}^{A(u)} \Psi\left(\frac{u}{\sigma_X(w)}\right)	dF_\calM(w)
	\le
		2\varepsilon \pr\left(\sup_{s \in [0,\calM]} X(s) > u\right).
\end{eqnarray*}
which in view of Theorem 4.1 in \cite{ArD11}, implies that 
\[
		\limsup_{u \toi} \frac{I_2}{\pr\left(X(\calM) > u\right)}
	\le 
		2\varepsilon.
\]
In order to complete the proof it suffices to pass with $\varepsilon \to 0$.
\Halmos
\subsection{Proof of Theorem \ref{long.st.incr}} \label{long.st.incr.proof}
In further analysis we use the following notation 
\[
		X_{\sigma_Y(h(u))}(s) := \frac{X(\sigma_Y(h(u))s)}{\sigma_X(\sigma_Y(h(u)))} \ \ \ \ {\rm and} \ \ \ \ 
		Y_{h(u)}(s) := \frac{Y(h(u)s)}{\sigma_Y(h(u))}.
\]
Moreover, we denote
\begin{eqnarray*}
		\calV_u := \inf_{s \in [0,1]} Y_{h(u)}(s), \ \ \ \ 
		\calW_u := \sup_{s \in [0,1]} Y_{h(u)}(s),
\end{eqnarray*}
\begin{eqnarray*}		
		\calV := \inf_{s \in [0,1]} B_{\alpha_Y/2}(s) \ \ \ \ 
		\calW := \sup_{s \in [0,1]} B_{\alpha_Y/2}(s).
\end{eqnarray*}
Let $\varepsilon > 0$ and $0 < A_\infty < \infty$. We start with the observation that $\lim_{u \toi} h(u) = \infty$, 
which also implies that $\lim_{u \toi} \sigma_Y(h(u)) = \infty$. Hence, 
due to Lemma 5.2 in \cite{BDZ04}
\begin{equation}	\label{weak.double}
		\left(\calV_u, \calW_u\right) \Rightarrow (\calV, \calW)\ \ \ {\rm as}\ u \toi
\end{equation}		
and
\begin{equation}		\label{weak.sup.fbm}
		\sup_{s \in [v, w]} X_{\sigma_Y(h(u))}(s) \Rightarrow \sup_{s \in [v, w]} B_{\alpha_X/2}(s)\ \ \ {\rm as}\ u \toi,
\end{equation}
uniformly for $(v,w) \in [-A_\infty, 0]\times[0, A_\infty]$, where $\Rightarrow$ denotes convergence in distribution.\\
By continuity of the sample paths of the processes $\{X(t)\}$ and $\{Y(t)\}$,
\begin{eqnarray}
\nonumber
\lefteqn{	
	\pr\left(\sup_{t \in [0, h(u)]} X(Y(t)) > u\right)}\\ 
	\nonumber
	&=&
	\pr\left(\sup_{t \in [\inf_{s \in [0, h(u)]}Y(s), 
	\sup_{s \in [0,h(u)]} Y (s)]} X(t) > u\right)  \\	
	\nonumber
	&=&
	\pr\left(\sup_{t \in [\calV_u, \calW_u]} X(\sigma_Y(h(u)) t) > u\right)\\
	&=&			\label{comb1}
  \pr\left(\sup_{t \in [\calV_u, \calW_u]} 
	X_{\sigma_Y(h(u))}(t) > \frac{u}{\sigma_X(\sigma_Y(h(u)))}\right).
\end{eqnarray}
To find an upper bound of \refs{comb1} we consider the following decomposition
\begin{eqnarray}
	\nonumber
	\lefteqn{
		\pr\left(\sup_{t \in [\calV_u, \calW_u]} 
		X_{\sigma_Y(h(u))}(t) > \frac{u}{\sigma_X(\sigma_Y(h(u))}\right) }\\
	\nonumber
	&\le&
		\left(\int_{-\infty}^{-A_\infty} \int_0^\infty +
		\int_{-A_\infty}^0 \int_0^{A_\infty} +
		\int_{-\infty}^0 \int_{A_\infty}^\infty\right) 
		\pr\left(\sup_{t \in [v, w]}  X_{\sigma_Y(h(u))}(t) > \frac{u}{\sigma_X(\sigma_Y(h(u)))}\right) d_{(\calV_u, \calW_u)}(v,w)	\\
	\nonumber
	&=&		
		I_1 + I_2 + I_3.
\end{eqnarray}
We analyze each of the integrals $I_1$, $I_2$, $I_3$ separately.\\
{\it Integral $I_1$}: Due to \refs{weak.double}, for sufficiently large $u$,
\begin{eqnarray*}
		I_1
	\le
		\pr\left(\calV_u \le -A_{\infty}\right)
	\le		
		(1 + \varepsilon) \pr\left(\calV \le -A_\infty\right).
\end{eqnarray*}
{\it Integral $I_3$}: Due to \refs{weak.double}, for sufficiently large $u$,
\begin{eqnarray*}
		I_3
	\le
		\pr\left(\calW_u > A_{\infty}\right)
	\le 
		(1 + \varepsilon) \pr\left(\calW > A_\infty\right).
\end{eqnarray*}
{\it Integral $I_2$}: For $u$ sufficiently large,
\begin{eqnarray}
		I_2
	\nonumber	
	&=&		
		\int_{-A_\infty}^0 \int_0^{A_\infty}
		\pr\left(\sup_{t \in [v, w]}  X_{\sigma_Y(h(u))}(t) > \frac{u}{\sigma_X(\sigma_Y(h(u)))}\right) d_{(\calV_u, \calW_u)}(v,w)	\\
	&\le&		\label{I1.weak.sup.fbm}
		(1 + \varepsilon) 
		\int_{-A_\infty}^0 \int_0^{A_\infty}
		\pr\left(\sup_{t \in [v, w]}  B_{\alpha_X/2}(t) > 1\right) d_{(\calV_u, \calW_u)}(v,w)	\\
	&\le&		\label{I1.weak.double}
		(1 + \varepsilon)^2 
		\int_{-A_\infty}^0 \int_0^{A_\infty}
		\pr\left(\sup_{t \in [v, w]}  B_{\alpha_X/2}(t) > 1\right) d_{(\calV, \calW)}(v,w)\\
	\nonumber
	&\le&		
		(1 + \varepsilon)^2 
		\pr\left(\sup_{t \in [\calV, \calW]}  B_{\alpha_X/2}(t) > 1 \right),
\end{eqnarray}

where \refs{I1.weak.sup.fbm} is due to \refs{weak.sup.fbm} and the fact that 
$\lim_{u \toi} \frac{u}{\sigma_X(\sigma_Y(h(u)))} = 1$, and \refs{I1.weak.double} is due to \refs{weak.double}, 
and the observation that 
$ \pr\left(\sup_{t \in [v, w]}  B_{\alpha_X/2}(t) > 1\right) $
is bounded and continuous function with respect to $(v,w)$. 
Thus, for each $\varepsilon > 0$, $A_\infty > A_0 > 0$,
\begin{eqnarray*}
		\limsup_{u \toi} \pr\left(\sup_{t \in [0, h(u)]} X(Y(t)) > u\right)
	&\le&
		(1 + \varepsilon)^2 \pr\left(\sup_{t \in [\calV, \calW]}B_{\alpha_X/2}(t) > 1\right)	\\
		&+& (1 + \varepsilon)\pr(\calV \le -A_\infty)
		+ (1 + \varepsilon)\pr(\calW > A_\infty).
\end{eqnarray*}
Analogously,
\[
		\liminf_{u \toi} \pr\left(\sup_{t \in [0, h(u)]} X(Y(t)) > u\right)
	\ge
		(1 - \varepsilon)^2 \pr\left(\sup_{t \in [\calV, \calW]}B_{\alpha_X/2}(t) > 1\right).
\]
In order to complete the proof it suffices to pass with $A_0 \to 0, A_\infty \toi$, and $\varepsilon \to 0$. 
\Halmos
\subsection{Proof of Theorem \ref{long.stat.main}}	\label{long.stat.main.proof}
In further analysis we use the following notation
\[
		\calT_{u} := \sup_{s \in [0,1]} Y_{h(u)}(s) 
		- \inf_{s \in [0,1]} Y_{h(u)}(s), \ \ \ \ \ 
{\rm where}	\	\	\	\	\	
		Y_{h(u)}(s) := \frac{Y(h(u)s)}{\sigma_{Y}(h(u))}.	
\]
Let $\varepsilon > 0$ and $0 < A_0 < A_\infty < \infty$. Note that due to Lemma 5.2 in \cite{BDZ04}
\begin{equation}	\label{weak.conv}
		\calT_{u} \Rightarrow \calT \ \ {\rm as} \ u \toi,
\end{equation}
where $\Rightarrow$ denotes convergence in distribution.	\\
It is convenient to consider the following decomposition
\begin{eqnarray*}	
		\pr\left(\sup_{s \in [0, h(u)]} X(Y(s)) > u\right)
	&=&
		\pr\left(\sup_{s \in \left[0,\calT_u\sigma_Y(h(u))\right]} X(s) > u\right)	\\
	&=& 
		\left(\int_0^{A_0} + \int_{A_0}^{A_\infty} + \int_{A_\infty}^\infty\right)	
		\pr\left(\sup_{s \in [0,t\sigma_Y(h(u))]} X(s) > u\right) dF_{\calT_u}(t)	\\
	&=&
		I_1 + I_2 + I_3.
\end{eqnarray*}
We analyze each of the integrals $I_1, I_2, I_3$ separately.\\
{\it Integral $I_1$}:
Due to Lemma 3.3 in \cite{Tan13}, for sufficiently large $u$,
\begin{eqnarray*}
		I_1 
	\nonumber	
	&\le&
		\pr\left(\sup_{s \in \left[0,A_0 \sigma_Y(h(u))\right]} X(s) > u\right)	\\
	&\le&		\label{A1.stat}
		(1 + \varepsilon)\left[1- \E\exp\left(-A_0 \exp(-r + \sqrt{2r} \calN)\right)\right]
\end{eqnarray*}
as $u \toi$.	\\ 
{\it Integral $I_3$}:
Due to \refs{weak.conv}, for sufficiently large $u$,
\[
		I_3	
	\le
		\pr(\calT_u > A_\infty)
	\le
		(1 + \varepsilon)\pr\left(\calT > A_\infty\right).
\]
{\it Integral $I_2$}:
%
\begin{eqnarray}
		I_2 
	\nonumber	
	&=&		
		\int_{A_0}^{A_\infty} \pr\left(\sup_{s \in [0, t\sigma_Y(h(u))]} X(s) > u\right) dF_{\calT_u}(t)	\\
	&\le&		\label{hashorva.conver}
		(1 + \varepsilon) \int_{A_0}^{A_\infty}
	  \left(1 - \E \exp\left(-t \exp\left(-r + \sqrt{2r} \calN \right)\right)\right) dF_{\calT_u}(t)\\
	&\le&		\label{weak.integral}
		(1 + \varepsilon)^2 \int_{A_0}^{A_\infty}
	  \left(1 - \E \exp\left(-t \exp\left(-r + \sqrt{2r} \calN \right)\right)\right) dF_\calT(t),
\end{eqnarray}
where \refs{hashorva.conver} is by Lemma 3.3 in \cite{Tan13} 
and \refs{weak.integral} is due to \refs{weak.conv}, 
and the observation that\\ $1 - \E \exp\left(-t \exp\left(-r + \sqrt{2r} \calN \right)\right)$ is bounded and continuous function 
with respect to $t \in [A_0, A_\infty]$.
Thus, for each $\varepsilon > 0$, $A_\infty > A_0 > 0$,
\begin{eqnarray*}
		\limsup_{u \toi} \pr\left(\sup_{s \in [0, h(u)]} X(Y(s)) > u\right)
	&\le&
		(1 + \varepsilon)^2 \int_{A_0}^{A_\infty}
	  \left(1 - \E \exp\left(-t \exp\left(-r + \sqrt{2r} \calN \right)\right)\right) dF_\calT(t)\\
		&+& (1 + \varepsilon)\left[1- \E\exp\left(-A_0 \exp(-r + \sqrt{2r} \calN)\right)\right]
		+ (1 + \varepsilon)\pr\left(\calT > A_\infty\right).
\end{eqnarray*}
Analogously,
\[
		\liminf_{u \toi} \pr\left(\sup_{s \in [0, h(u)]} X(Y(s)) > u\right)
	\ge
		(1 - \varepsilon)^2 \int_{A_0}^{A_\infty}
	  \left(1 - \E \exp\left(-t \exp\left(-r + \sqrt{2r} \calN \right)\right)\right) dF_\calT(t).
\]
In order to complete the proof it suffices to pass with $A_0 \to 0, A_\infty \toi$, and $\varepsilon \to 0$. 

\Halmos

\section{Acknowledgments}
I would like to thank Krzysztof D\c{e}bicki for a valuable review of the manuscript. 
This work was supported by NCN Grant No 2013/09/D/ST1/03698 (2014--2016).



\begin{thebibliography}{99}\small
%
 \bibitem{Adl90}
 Adler, R.J. (1990). \emph{An introduction to continuity, extrema, and related topics for general Gaussian processes}
 Inst. Math. Statist. Lecture Notes -Monograph Series, vol. 12, Inst. Math. Statist., Hayward, CA.
%
\bibitem{All01}
	Allouba, H. and Zheng, W. (2001).
	Brownian-time processes: the PDE connection and the half-derivative generator. 
	{\it Ann. Probab.} {\bf 29}, no. 4, 1780--1795.
%
\bibitem{ArD11}
Arendarczyk, M. and D\c{e}bicki, K. (2011).
Asymptotics of supremum distribution of a Gaussian process over a Weibullian time.
\emph{Bernoulli}, {\bf 17}, No. 1, 194--210.
%
\bibitem{ArD12}
Arendarczyk, M. and D\c{e}bicki, K. (2012). Exact asymptotics of supremum of a stationary Gaussian process over a random interval. {\it Statist. Probab. Lett.} {\bf 82}, 645--652.
%
\bibitem{Lif09}
Aurzada, F. and Lifshits, M. (2009). On the small deviation problem for some iterated processes.
\emph{Elec. J. Prob.}, {\bf 14(68)}, 1992--2010.
%
\bibitem{Bertoin96}
Bertoin, J. (1996). Iterated Brownian motion and stable $\frac{1}{4}$ subordinator. 
\emph{Statist. Probab. Lett.}, {\bf 27(2)}, 111--114.
%
\bibitem{Bur93}
				Burdzy, K.  (1993).
				Some path properties of iterated Brownian motion. Cinlar, E. (ed.) et al., 
				Seminar on stochastic processes. 
				Held at the Univ. of Washington, DC, USA, March 26-28, 1992.
				Basel: Birkhäuser. Prog. Probab. 33, 67--87.
%
\bibitem{Bur94}
	Burdzy, K.  (1994).
	Variation of iterated Brownian motion. Dawson, D. A. (ed.), 
	Measure-valued processes, stochastic partial differential equations, and interacting systems. 	Providence, RI: American Mathematical Society. CRM Proc. Lect. Notes. 5, 35--53.
%
\bibitem{Bur95}
Burdzy, K. and Khoshnevisan, D. (1995). The level sets of iterated Brownian
motion. In Seminaire de Probabilites, XXIX, volume 1613 of Lecture Notes in Math.,
pages 231--236. Springer, Berlin.
%
\bibitem{Curien11}
Curien, N. and Konstantopoulos, T. (2014). Iterating Brownian motions, {\it ad libitum}.
{\it J. Theor. Probab.} {\bf 27}(2), 433--448.
%
\bibitem{Deb14}
D\c{e}bicki, K., Hashorva, E. and Ji, L. (2014). Tail Asymptotics of Supremum of Certain Gaussian Processes
over Threshold Dependent Random Intervals. {\it Extremes}, {\bf 17(3)}, 411--429.
%
\bibitem{BDZ04}
 D\c{e}bicki, K., Zwart, A.P. and Borst, S.C.  (2004).
 The Supremum of a Gaussian Process over a Random Interval.
 \emph{Stat. Prob. Lett.} \textbf{68}, 221--234.
%
\bibitem{Eis99}
 Eisenbaum, N. and Shi, Z. (1999). Uniform oscillations of the local time of iterated
Brownian motion. \emph{Bernoulli}, {\bf 5(1)}, 49--65.
%
\bibitem{Funaki79}
Funaki T. (1979). Probabilistic construction of the solution of some higher order
parabolic differential equation[s]. {\it Proc. Japan Acad.}, {\bf 55}, Ser. A, 176--179.
%
\bibitem{Hu95}
Hu, Y., Pierre-Loti-Viaud, D. and Shi, Z. (1995). Laws of the iterated logarithm for iterated Wiener
processes. {\it J. Theor. Probab.} {\bf 8} no. 2, 303--319.
%
\bibitem{Khosh96}
Khoshnevisan, D. and  Lewis, T. M. (1996). Chungs law of the iterated logarithm for iterated Brownian motion. Ann. Inst. H. Poincaré Probab. Statist. {\bf 32}, no. 3, 349--359.
%
\bibitem{Khosh99}
Khoshnevisan, D. and  Lewis, T. M. (1999). Iterated Brownian motion and its intrinsic
skeletal structure. In Seminar on Stochastic Analysis, Random Fields and Applications
(Ascona, 1996), volume 45 of Progr. Probab., 201--210. Birkhauser,
Basel.
%
 \bibitem{Koz04}
 Kozubowski, T.J., Meerschaert, M.M., Molz, F.J. and Lu, S. (2004).
 Fractional Laplace model for hydraulic conductivity. \emph{Geophysical Res. Lett.} {\bf 31,} L08501.
%
\bibitem{Koz06}
		Kozubowski, T.J., Meerschaert, M.M., and Podg\'{o}rski, K. (2006). Fractional
		Laplace Motion. \emph{Adv. in Appl. Probab.} \textbf{38}, 451--464.
		%
%
\bibitem{Lif12}
Lifshits, M. (2012). {\it Lectures on Gaussian Processes.}
Springer, Heidelberg.
%
\bibitem{Linde04}
Linde, W. and Shi, Z. (2004). Evaluating the small deviation probabilities for subordinated L\'evy processes. 
{\it Stochastic Process. Appl.} {\bf 113}, no. 2, 273--287.
%
\bibitem{Linde08}
Linde, W. and Zipfel, P. (2008). Small deviation of subordinated processes over compact sets. \emph{Probab. Math. Statist.}, {\bf 28}, no. 2, 281--304.
%
\bibitem{Michna98}
Michna, Z. (1998). Self-similar processes in collective risk theory. {\it J. Appl. Math. Stoch. Anal.}, {\bf 11}, 429--448.
%
\bibitem{Nane06}
Nane, E. (2006). Laws of the iterated logarithm for $\alpha$-time Brownian motion. 
{\it Electron. J. Probab.} {\bf 11} no. 18, 434--459 (electronic).
%
\bibitem{Nour08}
	Nourdin, I. and Peccati, G. (2008). 
	Weighted power variations of iterated Brownian motion. 
	\emph{Electron. J. Probab.}, {\bf 13}, 1229--1256 (electronic).
%
\bibitem{Pic69}
				Pickands, J. III (1969).
				Asymptotic properties of maximum in a stationary 
				Gaussian process. \emph{Trans. Amer. Soc.} {\bf 145,} 75--86.
%
\bibitem{Pit96}
    Piterbarg, V.I. (1996). \emph{Asymptotic methods in the theory of Gaussian processes and fields.}
    Translations of Mathematical Monographs 148, AMS, Providence.
%
%
\bibitem{Shi95}
Shi, Z. (1995). Lower limits of iterated Wiener processes. {\it Statist. Probab. Lett.} {\bf 23} , no. 3, 259--270.
%
\bibitem{Tan12}
Tan, Z., Hashorva E. and Peng, Z. (2012). Asymptotics of maxima of strongly dependent Gaussian 
processes, {\it J. Appl. Probab.} {\bf 49}, 1106--1118.
%
\bibitem{Tan13}
Tan, Z. and Hashorva, E. (2013). Exact tail asymptotics for the supremum of strongly dependent Gaussian
processes over a random interval. {\it Lith. Math. J.} {\bf 53}, 91--102.
%
\bibitem{Turban04}
Turban, L. (2004). Iterated random walk. {\it Europhys. Lett.} {\bf 65(5)}, 627--632. 
%
\addcontentsline{toc}{chapter}{References}
\end{thebibliography}
\end{document}